\pgfplotsset{compat=1.13}
\newcounter{algo}[section]
\newcommand{\IR}{{\mathbb{R}}}
\newtheorem{remark}[theorem]{Remark}
\begin{document}       

\title{A relaxed interior point method for low-rank semidefinite programming problems
with applications to Matrix Completion
\thanks{The work of the first and the third 
author was supported by {\em Gruppo Nazionale per il Calcolo Scientifico} 
(GNCS-INdAM) of Italy. 
The work of the second author was supported
by EPSRC Research Grant EP/N019652/1.}}{}


\author{Stefania Bellavia\thanks{Dipartimento di Ingegneria Industriale, 
        Universit\`a degli Studi di Firenze, viale Morgagni 40, 
        50134 Firenze, Italia. Member of the INdAM Research Group GNCS. E-mail: stefania.bellavia@unifi.it} \and
Jacek Gondzio\thanks{School of Mathematics, The University of Edinburgh, 
        James Clerk Maxwell Building, The King’s Buildings, 
        Peter Guthrie Tait Road, Edinburgh, EH9 3FD, UK. E-mail: j.gondzio@ed.ac.uk}
\and Margherita Porcelli\thanks{Dipartimento di Matematica, 
        Universit\`a di Bologna, Piazza di Porta San Donato 5, 
        40126 Bologna, Italia. Member of the INdAM Research Group GNCS. E-mail: margherita.porcelli@unibo.it}
        }
\date{\today}

\today
\maketitle
\begin{abstract} 
A new relaxed variant of interior point method for low-rank semidefinite 
programming problems is proposed in this paper. The method is a step 
outside of the usual interior point framework. In anticipation 
to converging to a low-rank primal solution, a special nearly low-rank 
form of all primal iterates is imposed. To accommodate such a (restrictive) 
structure, the first order optimality conditions have to be relaxed 
and are therefore approximated by solving an auxiliary least-squares 
problem. The relaxed interior point framework opens numerous possibilities 
how primal and dual approximated Newton directions can be computed. 
In particular, it admits the application of both the first- and the 
second-order methods in this context. The convergence of the method 
is established. 
A prototype implementation is discussed and encouraging preliminary 
computational results are reported for solving the SDP-reformulation 
of matrix-completion problems.
\end{abstract}

\begin{keywords}
 Semidefinite programming, 
 interior point algorithms,
 low rank,
  matrix completion problems.
\end{keywords}

\begin{AMS}
90C22, 90C51, 65F10, 65F50 
\end{AMS}

\section{Introduction}
\label{Intro}
%
We are concerned with an application of an interior point method (IPM) 
for solving large, sparse and specially structured positive semidefinite 
programming problems (SDPs). 

Let $S\IR^{n\times n}$ denote the set of real symmetric matrices 
of order $n$ and let $U \bullet V$ denote the inner product between 
two matrices, defined by $trace (U^T V)$.
Consider the standard semidefinite programming (SDP) problem 
in its primal form
\begin{equation}\label{sdp_primal}
  \begin{array}{ll}
     \min & C \bullet  X \\
     \mbox{s.t. } & A_i \bullet X=b_i\;\;\; i=1,\ldots,m \\
                  & X \succeq 0, \\
  \end{array}
\end{equation}
where $A_i, C \in S\IR^{n\times n}$ and $b\in \IR^m$ are given and 
$X \in S\IR^{n\times n}$ is unknown and assume that matrices $A_i, i=1,2,\dots,m$ 
are linearly independent, that is $\sum_{i=1}^m d_i A_i = 0$ implies $d_i=0$, $i=1,\ldots,m$.
The dual form of the SDP problem associated with (\ref{sdp_primal}) is:
\begin{equation}\label{sdp_dual}
  \begin{array}{ll}
  \max & b^T y \\
     \mbox{s.t. } & \sum_{i=1}^m y_i A_{i} + S = C \\ 
     & S \succeq 0,  
  \end{array}
\end{equation}
where $y \in \IR^m$ and $S \in S\IR^{n\times n}$. 

The number of applications which involve semidefinite programming problems 
as a modelling tool is already impressive \cite{ToddSurvey,Boydsurvey96} 
and is still growing. Applications include problems arising in engineering, 
finance, optimal control, power flow, various SDP relaxations 
of combinatorial optimization problems, matrix completion or other applications 
originating from modern computational statistics and machine learning. 
Although the progress in the solution algorithms for SDP over the last 
two decades was certainly impressive (see the books on the subject 
\cite{AnjosLassere12,deKlerk}), the efficient solution of general 
semidefinite programming problems still remains a computational challenge. 

Among various algorithms for solving (linear) SDPs, interior point methods 
stand out as reliable algorithms which enjoy 
enviable convergence properties
%
%
and usually provide accurate solutions within reasonable time. 
However, when sizes of SDP instances grow, traditional IPMs which require 
computing exact Newton search directions hit their limits. Indeed, the effort 
required by the linear algebra in (standard) IPMs may grow as fast 
as ${\cal O}(n^6)$.

Although there exists a number of alternative approaches to interior 
point methods, such as for example 
\cite{BurerMonteiro2003,BurerMonteiro2005,KocvaraStingl}, 
which can solve certain SDPs very efficiently, they usually come 
with noticeably weaker convergence guarantees.
%
%
Therefore there is a need to develop faster IPM-based techniques 
which could preserve some of the excellent theoretical properties 
of these methods, but compromise on the other features in quest 
for practical computational efficiency. Customized IPM methods 
have been proposed for special classes of problems. 
They take advantage of sparsity and structure of the problems, 
see e.g. \cite{BellaviaGondzioPorcelli18,benson2000,GilbertHansson03,KKB07,LiuVanden2009,TohKojima2002}  
and the references in \cite{Andersen11}.

In  this paper we focus on problems in which 
the primal variable $X$ is expected to be {\it low-rank} at optimality.
Such situations are common in relaxations of combinatorial optimization 
problems \cite{benson2000}, for example in maximum cut problems \cite{GoemansWilliamson}, 
as well as in matrix completion problems \cite{CaiCandesShen2010}, 
general trust region problems and quadratically 
constrained quadratic problems in complex variables \cite{lemon2016}.
We exploit the structure of the sought solution and relax the rigid structure of IPMs 
for SDP. In particular we propose to weaken the usual connection 
between the primal and dual problem formulation and exploit any special 
features of the primal variable $X$.
However, the extra flexibility added to the interior point method 
comes at a price: the worst-case polynomial complexity has to be 
sacrificed in this case.

Rank plays an important role in semidefinite programming. 
For example, every polynomial optimization problem has a natural SDP 
relaxation, and this relaxation is exact when it possesses a rank-1 
solution \cite{lemon2016}. On the other hand, for any general problem 
of the form (\ref{sdp_primal}), there exists an equivalent formulation 
where an additional bound $r$ on the rank of $X$ may be imposed 
as long as $r$ is not too small \cite{BurerMonteiro2005}. 
More specifically, under suitable assumptions, there exists an optimal 
solution $X^*$ of (\ref{sdp_primal}) with rank $r$ satisfying $r(r+1)/2 \le m$. 
There have been successful attempts to identify low rank submatrices
in the SDP matrices and eliminate them with the aim to reduce the rank
and hence the difficulty of solving an SDP. A technique called
{\it facial reduction} \cite{HuangWolkowiczJoGO2018} has been analysed
and demonstrated to work well in practice.
Interestingly, when positive semidefinite programs are solved using 
interior-point algorithms, then because of the nature of logarithmic 
barrier function promoting the presence of nonzero eigenvalues, 
the primal variable $X$ typically converges to a maximum-rank 
solution \cite{guler1993,lemon2016}. However, in this paper we aim 
at achieving the opposite. We want to design an interior point method  
which drives the generated sequence of iterates to converge to a low-rank 
solution. We assume that constraint matrices are sparse 
and we search 
for a solution $X$ of rank $r$ of the form $X= U U^T$ 
with $U\in \IR^{n\times r}$. \\

Special low-rank structure of $X$ may be imposed directly in problem 
(\ref{sdp_primal}), but this excludes the use of an interior point algorithm 
(which requires all iterates $X$ to be strictly positive definite). 
Burer and Monteiro \cite{BurerMonteiro2003,BurerMonteiro2005} and 
their followers \cite{Boumal2016, Boumal2018} have used such an approach with great 
success. Namely, they have substituted $U U^T$ for $X$ in (\ref{sdp_primal}) 
and therefore have replaced it with the following nonlinear programming 
problem
\begin{equation}
  \label{sdp_monteiro}
  \begin{array}{ll}
     \min & C \bullet  (U U^T) \\
     \mbox{s.t. } & A_i\bullet (U U^T) = b_i \;\;\; i=1,\ldots,m,
  \end{array}
\end{equation}
with $U \in \IR^{n \times r}$. Although such transformation removes 
the difficult positive definiteness constraint (it is implicit as $X = U U^T$), 
the difficulty is shifted elsewhere as both the objective and constraints 
in (\ref{sdp_monteiro}) are no longer linear, but instead quadratic 
and in general non-convex. In comparison with a standard IPM the method 
proposed in \cite{BurerMonteiro2003,BurerMonteiro2005} and applied 
to solve large-scale problems enjoys substantially reduced memory 
requirements and very good efficiency and accuracy.
However, due to nonconvexity of (\ref{sdp_monteiro}), local methods 
may not always recover the global optimum. In \cite{Boumal2016,Boumal2018}
authors showed that, despite the non-convexity, first- and second-order
necessary optimality conditions are also sufficient, provided that 
rank $r$ is large enough and constraints satisfy some regularity conditions.
That is, when applied to several classes of SDPs, the low-rank 
Burer-Monteiro formulation is very unlikely to converge to any 
spurious local optima.

In this paper we propose a different approach. 
We would like to preserve as many of the advantageous properties 
of interior point methods as possible and expect to achieve it 
by (i) working with the original problem (\ref{sdp_primal}) and 
(ii) exploiting the low-rank structure of $X$. Knowing that 
at optimality $X$ is low-rank we impose a special form 
of the primal variable throughout the interior point algorithm 
$$
  X = \mu I_n + U U^T,
$$
with $U\in \IR^{n\times r}$, for a given $r>0$ and $\mu$ denoting  
the barrier term. 
Hence $X$ is full rank (as required by IPM), but approaches the low-rank 
matrix as $\mu$ goes to zero. Imposing such special structure of $X$ 
offers an advantage to an interior point algorithm: it can work 
with an object of size $n r$ rather than a full rank $X$ of size $n^2$. 
We have additionally considered an adaptive choice of $r$ assuming 
that this rank may not be known a priori. Indeed, the method can start 
with $r$ equal to 1 or 2 and gradually increase $r$ to the necessary 
minimum rank ({\em target rank}). 
Remarkably, the method can also handle problems with nearly-low-rank 
solution, as the primal variable is not assumed to be low-rank along 
the iterations, but it is gradually pushed to a low-rank matrix. 
Finally, the presence of the {\it perturbation term} $\mu I$ allows 
to deal with possibly noisy right-hand side $b$ as well. 
We also further relax the rigid IPM structure.
Starting from a dual feasible approximation, we
dispose of dual slack variable $S$
and avoid computations 
which would involve large Kronecker product matrices of dimension 
$n^2 \times n^2$ (and that in the worst case might require up to ${\cal O}(n^6)$ 
arithmetic operations).  We investigate the use of both first- and 
second-order methods for the step computation and devise matrix-free 
implementations of the linear algebra phase arising in the second-order method.
Such implementations are well-suited to the solution 
of SDP relaxations of matrix completion problems \cite{CandesRecht2009}.

The paper is organised as follows. 
After a brief summary of notation used in the paper provided 
at the end of this section, in Section \ref{sectDBA} we present 
the general framework and deliver some theoretical insights 
into the proposed method. 
In Section \ref{RankUpDown} we explain the mechanism which allows 
to adaptively reveal the rank of the minimum rank solution matrix $X$. 
The proposed approach offers significant flexibility in the way 
how Newton-like search directions are computed. They originate 
from a solution of a least squares problem. 
We see  it in detail in Section \ref{SolveNLS}. 
Next, in Section \ref{sec:mc} we discuss the properties of low-rank 
SDPs arising in matrix completion problems and in Section \ref{sec:exp} 
we present preliminary computational results obtained with a prototype 
Matlab implementation of the new algorithm. {\color{black}
We also provide a comparison of its efficiency against {\sc OptSpace} 
\cite{optspace1,optspace2} when both methods are applied to various 
instances of matrix completion problems.}
Finally, in Section \ref{Concls} we give our conclusions.
Appendix \ref{sec:app} contains some notes on the Kronecker 
product of two matrices and on matrix calculus. \\

\noindent
{\bf Notation.} 
The norm of the matrix associated with the inner product between 
two matrices $U \bullet V = trace (U^T V)$ is the Frobenius norm, 
written $\|U\|_F := (U\bullet U )^{1/2}$, while $\|U\|_2$ denotes 
the L$_2$-operator norm of a matrix. Norms of vectors will always 
be Euclidean. The symbol $I_p$ denotes the identity matrix of dimension $p\times p$.

Let ${\cal A}$ be the linear operator ${\cal A}: S\IR^n\rightarrow \IR^m$ defined by
$$
{\cal A}(X)=(A_i\bullet X)_{i=1}^m\in \IR^m,
$$
with $A_i \in S\IR^{n\times n}$, then its transposition ${\cal A}^T$ 
$$
{\cal A}^Tv=\sum_{i=1}^m v_i A_i.
$$
Moreover, let $A^T$ denote  the matrix representation of ${\cal A}^T$ 
with respect to the standard bases of $\IR^n$, that is
\begin{equation}\label{AT}
A^T := [vec(A_1), vec(A_2), \dots ,vec(A_m)] \in \IR^{n^2 \times m },
\end{equation}
and
$$
{\cal A}(X) = A\, vec(X) \quad \mbox{ and } \quad {\cal A}^Tv=mat(A^{T} v),
$$
where $mat$ is the ``inverse'' operator to $vec$ 
(i.e., $mat(vec(A_i)) = A_i \in S\IR^{n\times n}$) and 
the $vec$ operator is such that 
$vec(A)$ is the vector of columns of $A$ stacked one under the other.

\section{Relaxed interior point method for low-rank SDP}
\label{sectDBA}
%
Interior point methods for semidefinite programming problems work 
with the perturbed first-order optimality conditions 
for problems (\ref{sdp_primal})-(\ref{sdp_dual}) given by:
\begin{equation}
\label{F_mu}
   F_\mu(X,y,S) = 
   \left (
   \begin{array}{c}
      {\cal A}^T y + S - C \\
      {\cal A} (X) - b \\
      XS-\mu I_n
   \end{array}
   \right )
   =0, \ \mu > 0, \
   S \succeq 0 \ 
   X \succeq 0.
\end{equation} 
A general IPM involves a triple $(X,y,S)$, performs steps in Newton 
direction for (\ref{F_mu}), and keeps its subsequent iterates 
in a neighbourhood of the central path \cite{AnjosLassere12,deKlerk}.
The convergence is forced by gradually reducing the barrier term $\mu$.
However, having in mind the idea of converging to a low-rank solution, 
we find such a structure rather restrictive and wish to relax it. 
This is achieved by removing explicit $S$ from the optimality conditions
and imposing a special structure of $X$. 

Substituting $S = C - {\cal A}^Ty $ from the first equation into the third 
one, we get
\begin{equation}
\label{F_mu2}
   \left (
   \begin{array}{c}
      {\cal A} (X) - b \\
      X (C - {\cal A}^T y) -\mu I_n 
   \end{array}
   \right )
   =0, \ \mu > 0, \
C - {\cal A}^Ty \succeq 0,\   X \succeq 0.
\end{equation}
Next, following the expectation that at optimality $X$ has rank $r$, 
we impose on $X$ the following special structure 
\begin{equation}
   \label{Xlow}
   X = \mu I_n + U U^T,
\end{equation}
with $U \in \IR^{n \times r}$, for a given $r>0$.
We do not have any guarantee that there exists a solution 
of \eqref{F_mu2} with such a structure, but we can consider 
the least-square problem:
\begin{equation}
   \label{least-square}
      \min_{U,y} \phi_\mu(U,y) := \frac{1}{2}\|F^r_{\mu}(U,y)\|^2, 
\end{equation}
where $F^r_{\mu}(U,y): \IR^{n\times r } \times \IR^{m}\rightarrow \IR^{n^2+m}$ 
is given by
\begin{equation} 
   \label{F_mu3}
   F^r_{\mu}(U,y)=
   \left (
   \begin{array}{c}
      {\cal A} (\mu I_n + U U^T) - b \\
      vec((\mu I_n + U U^T) (C - {\cal A}^T y) - \mu I_n )
   \end{array}
   \right ), \ \mu > 0.
\end{equation}
The nonlinear function $F^r_{\mu}(U,y)$ has been obtained substituting 
$X = \mu I_n + U U^T$ in \eqref{F_mu2} after vectorization of the second block.
The associated system $F^r_{\mu}(U,y)=0$ is overdetermined with $(m+n^2)$ equations 
and ($nr+m$) unknowns $(U,y)$. In the following, for the sake of simplicity, 
we identify $\IR^{n\times r} \times \IR^{m}$ with $\IR^{nr+m}$.

It is worth mentioning at this point that the use of least-squares 
type solutions to an overdetermined systems arising in interior point 
methods for SDP was considered in \cite{GN1,GN2}. Its primary objective 
was to avoid symmetrization when computing search directions and 
the least-squares approach was applied to a standard, complete 
set of perturbed optimality conditions (\ref{F_mu}).

We propose to apply to problem (\ref{least-square}) a similar framework 
to that of interior point methods, namely: Fix $\mu$, iterate 
on a tuple $(U,y)$, and make steps towards a solution to (\ref{least-square}). 
This opens numerous possibilities. One could for example 
compute the search directions for both variables at the same time, 
or alternate between the steps in $U$ and in $y$. 

Bearing in mind that (\ref{F_mu}) are the optimality conditions for \eqref{sdp_primal}
and assuming that a rank $r$ optimal solution of \eqref{sdp_primal} exists,
we will derive an upper bound on the optimal residual 
of the least-squares problem (\ref{least-square}).
Assume that a solution $(X^*,y^*,S^*)$ of the KKT conditions exists 
such that $X^*=U^*(U^*)^T$, $U^* \in \IR^{n \times r}$, that is
\begin{equation}\label{optim}
   \begin{array}{l}
      {\cal A} (U^*(U^*)^T) = b \\
      S^* = C - {\cal A}^T y^* \succeq 0 \\
      U^*(U^*)^T S^* = 0. 
   \end{array}
\end{equation}
Then evaluating (\ref{F_mu3}) at $(U^*,y^*)$  and using \eqref{optim} we get
$$
   F^r_{\mu} (U^*,y^*) = 
   \left (
   \begin{array}{c}
      {\cal A} (\mu I_n)+  {\cal A}(U^* (U^*)^T) - b\\
     vec( (\mu I_n + U^* (U^*)^T) (C - {\cal A}^T y^*) -\mu I_n )
   \end{array}
   \right )=
   \left (
   \begin{array}{c}
      \mu{\cal A} ( I_n) \\
      \mu\, vec(S^* - I_n) 
   \end{array}
   \right ). 
$$
Consequently, we obtain the following upper bound 
for the residual of the least-squares problem \eqref{least-square}: 
\begin{eqnarray}
 \phi_{\mu}(U^*,y^*)&=&
     \frac{1}{2} \|{\cal A} (U^* (U^*)^T+\mu I_n) - b\|^2 
    + \frac{1}{2}\|(U^*(U^*)^T+\mu I_n) (C - {\cal A}^T y^*) - \mu I_n\|_F^2 \nonumber \\
      &  =  & \omega^* \mu^2,
       \label{bound_res}
   \end{eqnarray}
   where
   \begin{equation}\label{omega}
    \omega^*=\frac{1}{2}\|{\cal A} (I_n)\|_2^2 + \frac{1}{2}\|S^*-I_n\|^2_F.
\end{equation}
%
%

Assuming to have an estimate of $\omega^*$ 
we are now ready to sketch in Algorithm \ref{GF_algo} the general 
framework of a new {\it relaxed} interior point method.
To start the procedure we need an initial guess $(U_0,y_0)$ 
such that $U_0$ is full column rank and $S_0 = C - {\cal A}^T y_0$ 
is positive definite,  and an initial barrier parameter $\mu_0 > 0$. 
At a generic iteration $k$, given the current barrier parameter 
$\mu_k>0$, we compute an approximate solution $(U_k,\bar y_k)$ 
of \eqref{least-square} such that 
$\phi_{\mu_k}(U_k,\bar y_k)$ is below $\mu_k^2 \omega^*$. Then, the dual 
variable $y_k$ and the dual slack variable $S_k$ are updated as follows:
$$
 \begin{array}{l}
 y_k = y_{k-1} + \alpha_k (\bar y_k - y_{k-1}) \\
 S_k = C - {\cal A}^T y_k  = S_{k-1}-\alpha_k {\cal A}^T (\bar y_k - y_{k-1}) \\
 \end{array}
$$
with $\alpha_k \in (0,1]$ such that $S_k$ remains positive definite. 
We draw the reader's attention to the fact that although the dual variable $S$ 
does not explicitly appear in optimality conditions (\ref{F_mu2}) or (\ref{F_mu3}), 
we do maintain it as the algorithm progresses and make sure that $(S_k,y_k)$ 
remains dual feasible. 
Finally, to complete the major step of the algorithm, the barrier parameter 
is reduced and a new iteration is performed. 

Note that so far we have assumed that there exists a solution 
to \eqref{sdp_primal}  of rank $r$. In case such a solution does not 
exist the optimal residual of the least-squares problem is not guaranteed 
to decrease as fast as $\mu_k^2$. This apparently adverse case can 
be exploited to design an adaptive procedure that increases/decreases $r$ 
without requiring the knowledge of the solution's rank. 
This approach will be described in Section \ref{RankUpDown}.

\begin{algorithm}
\caption{General framework of the Relaxed Interior Point algorithm 
         for solving low-rank SDP}\label{GF_algo}
\begin{algorithmic}[1]
\Require Initial $(U_0,y_0)$ with $U_0\in \IR^{n\times r}$ 
         and $S_0 = C - {\cal A}^T y_0$ positive definite, $\mu_0 >0$, 
 $\gamma\ge \sqrt{\omega^*}$, $\sigma \in (0,1)$.
 \For{ $\;k=1,2,\dots$}
 \State Find $(U_k, \bar y_k)$ such that
$$
\|{\cal A} (U_k U_k ^T+\mu_k I) - b\|^2 
               +  \|(U_kU_k^T+\mu_k I) (C - {\cal A}^T \bar y_k) - \mu_k I_n\|_F^2 
\le \gamma^2 \mu_k^2
$$
\ \ \ \ by approximately solving 
\begin{equation}\label{minphi}
 \min_{(U,y)} \phi_{\mu_k}(U,y).
\end{equation}

\State If $C - {\cal A}^T \bar y_k $ is positive definite set $\alpha_k=1$ otherwise, 
set $\Delta y=\bar y_k- y_{k-1}$, 
\hspace*{15pt}$\Delta S=-{\cal A}^T \Delta y$ and backtrack along $\Delta S$ to ensure
$S_k= S_{k-1}+\alpha_k \Delta S$ positive 
\hspace*{13pt} definite.
\State  Set $y_k= y_{k-1}+\alpha_k \Delta y$ (and $X_k=(U_k U_k ^T+\mu_k I)$). 
\State Set $\mu_k =\sigma \mu_{k-1}$

\EndFor
\end{algorithmic}
\end{algorithm}

In the remaining part of this section we state some of the properties 
of the Algorithm which are essential to make it work in practice. 

First we note that dual constraint is always satisfied by construction 
and the backtracking process at Line 3 is well-defined. This is proved 
 in  Lemma 4 of \cite{BellaviaGondzioPorcelli18} which is repeated below 
for sake of reader's convenience.  

\begin{lemma} \label{lemma_alpha}
Let $\Delta S$ be computed in Step 3 of Algorithm \ref{GF_algo} at iteration $k$ 
and $S_{k-1}$ be computed at the previous iteration $k-1$. Then, there 
exists $\alpha_k\in (0, 1]$ such that $S_k=S_{k-1}+\alpha_k \Delta S$ 
is positive definite.
\end{lemma}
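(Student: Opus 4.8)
The plan is to exploit the fact that the cone of positive definite matrices is open and that $S_{k-1}$ lies in its interior, so a sufficiently small step along $\Delta S$ cannot leave the cone. Concretely, I would first record that $S_{k-1} \succ 0$: this holds for $k=1$ because $S_0 = C - {\cal A}^T y_0$ is positive definite by the initialization assumption, and for $k>1$ it is precisely what the previous application of this lemma guarantees, so the statement is really the inductive step of a straightforward induction. Let $\lambda := \lambda_{\min}(S_{k-1}) > 0$ denote the smallest eigenvalue.

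Next I would estimate the quadratic form of $S_k = S_{k-1} + \alpha_k \Delta S$. For every nonzero $v \in \IR^n$,
$$
v^T S_k v = v^T S_{k-1} v + \alpha_k\, v^T \Delta S\, v \;\ge\; \big(\lambda - \alpha_k \|\Delta S\|_2\big)\|v\|^2,
$$
using $v^T S_{k-1} v \ge \lambda \|v\|^2$ together with $|v^T \Delta S\, v| \le \|\Delta S\|_2 \|v\|^2$. Hence $S_k$ is positive definite whenever the bracket $\lambda - \alpha_k \|\Delta S\|_2$ is strictly positive.

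To finish, I would choose $\alpha_k$ accordingly. If $\Delta S = 0$ then $S_k = S_{k-1} \succ 0$ for any choice, so take $\alpha_k = 1$; otherwise set, for instance, $\alpha_k = \min\{1,\ \tfrac{\lambda}{2\|\Delta S\|_2}\} \in (0,1]$, which keeps the bracket strictly positive and therefore forces $S_k \succ 0$. This already establishes existence; in the algorithm the backtracking of Step 3 merely selects the largest admissible such $\alpha_k$.

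I do not expect a genuine obstacle here: the only points requiring care are not to assume more about $\Delta S$ than is available (it is symmetric but in general indefinite, being $-{\cal A}^T \Delta y$), and to treat the degenerate case $\Delta S = 0$ separately so that the ratio $\lambda / \|\Delta S\|_2$ is well defined. The openness/continuity viewpoint yields the same conclusion non-quantitatively, but the eigenvalue bound above has the advantage of exhibiting an explicit admissible step length.
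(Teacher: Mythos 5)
Your proof is correct: the inductive observation that $S_{k-1}\succ 0$ (from the initialization at $k=1$ and from the lemma itself at earlier iterations) is exactly the paper's "by construction" step, and your quadratic-form bound $v^T S_k v \ge (\lambda_{\min}(S_{k-1}) - \alpha_k\|\Delta S\|_2)\|v\\|^2$ with the choice $\alpha_k=\min\{1,\lambda_{\min}(S_{k-1})/(2\|\Delta S\|_2)\}$ is a valid and complete existence argument, including your careful handling of the degenerate case $\Delta S=0$. The paper reaches the same conclusion by a slightly different technical device: it writes $S_{k-1}+\alpha_k\Delta S = S_{k-1}^{1/2}\bigl(I_n+\alpha_k S_{k-1}^{-1/2}\Delta S\, S_{k-1}^{-1/2}\bigr)S_{k-1}^{1/2}$ and, since congruence preserves the inertia, reads off the \emph{sharp} admissible range $\alpha_k < -1/\lambda_{\min}\bigl(S_{k-1}^{-1/2}\Delta S\, S_{k-1}^{-1/2}\bigr)$ (relevant only when $\Delta S$ is indefinite; otherwise $\alpha_k=1$ works). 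The trade-off is minor but real: your operator-norm bound is more elementary (no matrix square roots) yet conservative, since $\lambda_{\min}(S_{k-1})/\|\Delta S\|_2$ can be much smaller than the true breakdown threshold, whereas the paper's congruence argument characterizes exactly how far one may step before positive definiteness is lost, which is the quantity a backtracking implementation actually approximates. For the purpose of the lemma (mere existence of $\alpha_k\in(0,1]$), both arguments are equally adequate.
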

\begin{proof}
Assume that $C - {\cal A}^T \bar y_k $
is not positive definite, otherwise $\alpha_k=1$. 
Noting that 
 $S_{k-1} \succ 0$ by construction, it follows that  $\Delta S$ is indefinite and  
$S_{k-1} + \alpha_k \Delta S \succ 0$ whenever $\alpha_k$ is sufficiently small.
In particular, since $S_{k-1} + \alpha_k \Delta S  = S_{k-1}^{1/2} (I_n + \alpha_k S_{k-1}^{-1/2} \Delta S S_{k-1}^{-1/2}) S_{k-1}^{1/2}$,
the desired result holds with 
$$
\alpha_k<  \frac{-1}{\lambda_{\min}(S_{k-1}^{-1/2} \Delta S S_{k-1}^{-1/2})}.
$$
\end{proof}

Note that if backtracking is needed (i.e. $\alpha_k<1$) to maintain 
the positive definiteness of the dual variable, then after updating 
$S_k$ in Step 5 the centrality  measure  $\|X_k S_k - \mu_k I_n\|$ may increase and it is not guaranteed 
to remain below $\gamma \mu_k$. Indeed, by setting
$S_k = \bar S_k - (1-\alpha_k) \Delta S$
with $\bar S_k = C - {\cal A}^T \bar y_k$, we have:
\begin{equation}\label{centr_inc}
 \|X_k S_k - \mu_k I_n\|^2_F \le\gamma^2 \mu^2_k + (1-\alpha_k)^2 \|X_k\Delta S\|^2_F 
     - 2 (1-\alpha_k) (X_k \bar S_k - \mu_k I_n)\bullet (X_k \Delta S),
\end{equation}
that is the centrality measure  may actually increase along the iterations 
whenever $\alpha_k$ does not approach one as $\mu_k$ goes to zero.
In the following we analyse the convergence properties of Algorithm \ref{GF_algo} 
when this adverse situation does not occur, namely under the following assumption:
%
%
\vskip 5 pt
\noindent
{\em Assumption 1}. Assume that there exists $\bar k>0$ 
such that $\alpha_k= 1$ for $k\ge \bar k$. 
\vskip 5 pt 
To the best of authors knowledge, it does not seem possible 
to demonstrate that eventually $\alpha_k$ is equal to one. 
This is because we impose a special form of $X$ in (\ref{Xlow}) 
and make only a weak requirement regarding the proximity 
of the iterate to the central path:  
\begin{equation}\label{neib}
  \|(U_kU_k^T+\mu_k I) (C - {\cal A}^T \bar y_k) - \mu_k I_n\|_F \le \gamma \mu_k
\end{equation}
with $\gamma$ possibly greater than one. 

\begin{proposition}
Let Assumption 1 hold. Assume that a solution of rank $r$ of problem 
(\ref{sdp_primal}) exists and that the sequence $\{U_k,y_k\}$ admits a limit point
$(U^\dagger, y^\dagger)$. Then,   
\begin{itemize}
\item $X^\dagger=U^\dagger (U^\dagger)^T$ is primal feasible, 
\item $X^\dagger S^\dagger=0$ with 
      $ S^\dagger= C- {\cal A}^T y^\dagger $,
\item $S^\dagger$ is positive semidefinite. 
\end{itemize}
\end{proposition}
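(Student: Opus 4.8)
The plan is to realize the limit point along a convergent subsequence, use Assumption~1 to identify the actual dual iterate $y_k$ with the least-squares solution $\bar y_k$, and then simply pass to the limit in the residual bound enforced at Step~2 of Algorithm~\ref{GF_algo}. The three claims then fall out of continuity together with closedness of the cone of positive semidefinite matrices.

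First I would fix a subsequence $\{(U_k,y_k)\}_{k\in K}$ with $(U_k,y_k)\to(U^\dagger,y^\dagger)$ as $k\to\infty$ in $K$. Since Step~5 gives $\mu_k=\sigma^k\mu_0$ with $\sigma\in(0,1)$, the full sequence satisfies $\mu_k\to 0$, hence also $\mu_k\to 0$ along $K$. Consequently $X_k=U_kU_k^T+\mu_k I_n\to U^\dagger(U^\dagger)^T=X^\dagger$, and $S_k=C-{\cal A}^T y_k\to C-{\cal A}^T y^\dagger=S^\dagger$ by linearity and continuity of ${\cal A}^T$.

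The decisive point is that Assumption~1 allows me to replace $\bar y_k$ by $y_k$ in the acceptance test. For $k\ge\bar k$ we have $\alpha_k=1$, which by Step~3 can only occur when $C-{\cal A}^T\bar y_k\succ 0$, and by Step~4 forces $y_k=\bar y_k$; in particular $S_k=C-{\cal A}^T y_k\succ 0$ for all $k\ge\bar k$. Therefore, for every $k\in K$ with $k\ge\bar k$, the bound produced at Step~2 reads
$$
\|{\cal A}(X_k)-b\|^2+\|X_k S_k-\mu_k I_n\|_F^2\le\gamma^2\mu_k^2.
$$
Letting $k\to\infty$ along $K$, the right-hand side tends to $0$ while the left-hand side converges to $\|{\cal A}(X^\dagger)-b\|^2+\|X^\dagger S^\dagger\|_F^2$. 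As both terms are nonnegative, this forces ${\cal A}(X^\dagger)=b$ and $X^\dagger S^\dagger=0$; combined with $X^\dagger=U^\dagger(U^\dagger)^T\succeq 0$, this establishes both primal feasibility and the complementarity condition. Finally, each $S_k$ with $k\ge\bar k$ is positive definite, so $S^\dagger$, being the limit of matrices in the closed positive semidefinite cone, is itself positive semidefinite.

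The step I expect to carry the real weight is precisely the appeal to Assumption~1. The residual guarantee of Step~2 is phrased in terms of the least-squares solution $\bar y_k$, whereas the limit is taken over the actual dual iterates $y_k$; inequality~(\ref{centr_inc}) shows that, absent $\alpha_k\to 1$, the centrality term $\|X_kS_k-\mu_kI_n\|_F$ can be inflated by the backtracking correction and need not stay below $\gamma\mu_k$. Thus the clean passage to the limit genuinely requires $\alpha_k=1$ eventually, after which $y_k=\bar y_k$ makes the bound directly applicable. Everything remaining is routine continuity and the closedness of the positive semidefinite cone.
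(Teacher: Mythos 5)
Your proof is correct and follows essentially the same route as the paper: both arguments use Assumption~1 to make the backtracking correction vanish (so that the Step~2 residual bound applies to the actual iterates $X_k S_k - \mu_k I_n$, which is exactly what \eqref{centr_inc} with $\alpha_k=1$ expresses), then pass to the limit to get primal feasibility and complementarity, and obtain $S^\dagger \succeq 0$ by closedness of the positive semidefinite cone. The only cosmetic difference is that you work explicitly along a convergent subsequence while the paper assumes, for simplicity, that the whole sequence converges.
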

\vskip 4pt
\begin{proof}
Assume for the sake of simplicity that the whole sequence is converging 
to $(U^\dagger, y^\dagger)$. 
Taking into account that $\lim_{k\rightarrow \infty} \mu_k=0$, it follows
$\lim_{k\rightarrow \infty} U_k (U_k)^T + \mu_k I = (U^\dagger)(U^\dagger)^T$. 
Then $X^\dagger=  (U^\dagger)(U^\dagger)^T$ has at most rank $r$ and it is 
feasible as 
$$
\lim_{k\rightarrow \infty}\|{\cal A} (U_k U_k ^T+\mu_k I) - b\| 
    \le \lim_{k\rightarrow \infty}\gamma \mu_k=0.
$$
Moreover, 
 from \eqref{centr_inc} and Assumption 1 it follows 
$$
\lim_{k\rightarrow \infty}\|(U_kU_k^T+\mu_k I) (C - {\cal A}^T y_k) -\mu_k I_n\|_F = 0,
$$
which implies $X^\dagger S^\dagger=0$ and by construction ensures 
that $S^\dagger$ is positive semidefinite being a limit point 
of a sequence of positive definite matrices. 
\end{proof}

From the previous proposition it follows that $(X^\dagger, y^{\dagger},S^\dagger)$ 
solves \eqref{F_mu}. Moreover, $X^\dagger$ has rank $r$, unless $U^\dagger$ 
is not full column rank. This situation can happen only in the case \eqref{sdp_primal} 
admits a solution of rank smaller than $r$. In what follows for sake of simplicity 
we assume that the limit point $U^\dagger$ is full column rank.

\begin{remark}\label{rem:nearly}
It is worth observing that due to the imposed structure 
of matrices (\ref{Xlow}) all iterates $X_k$ are full rank, 
but asymptotically they approach rank $r$  matrix. Moreover, 
the minimum distance of $X_k$ to a rank $r$ matrix is given by $\mu_k$, i.e.,
\begin{equation}\label{dist:rank}
   \min_{rank(Y)=r} \|X_k-Y\|_2=\mu_k,
\end{equation}
and the primal infeasibility 
is bounded by $\gamma \mu_k$. This allows us to use the proposed 
methodology also when the sought solution is close to a rank $r$ matrix 
(``nearly low-rank'') and/or some entries in vector $b$ are corrupted 
with a small amount of noise.

\end{remark}

\section{Rank updating/downdating}
\label{RankUpDown}
%
The analysis carried out in the previous section requires the knowledge 
of $\gamma\ge \sqrt{\omega^*}$ and of the rank $r$ of the sought solution. 
As the scalar $\gamma$ is generally not known, at a generic iteration $k$ 
the optimization method used to compute an approximate minimizer 
of \eqref{least-square} is stopped when a chosen first-order criticality 
measure $\psi_{\mu}$ goes below the threshold $\eta_2 \mu_k$ where 
$\eta_2$ is a strictly positive constant.  
This way, the accuracy in the solution of \eqref{least-square} 
increases as $\mu_k$ decreases.
For $\psi_{\mu}$, we have chosen $\psi_{\mu}(U,y)=\|\nabla \phi_{\mu}(U,y)\|_2$.

Regarding the choice of the rank $r$, there are situations where 
the rank of the sought solution is not known. Below we describe 
a modification of Algorithm \ref{GF_algo} where, starting from 
a small rank $r$, the procedure adaptively increases/decreases it. 
This modification is based on the observation that if a solution 
of rank $r$ exists the iterative procedure used in Step 2, should 
provide a sequence $\{U_k\}$ such that the primal infeasibility 
also decreases with $\mu_k$. Then, at each iteration the ratio 
\begin{equation}\label{rho}
  \rho_k = \frac{\|{\cal A} (U_k U_k ^T+\mu_k I) - b\|_2}
                {\|{\cal A} (U_{k-1} U_{k-1} ^T+\mu_{k-1} I) - b\|_2}
\end{equation}
is checked. 
If this ratio is larger than $\eta_1$, where $\eta_1$ is a given 
constant in $(\sigma,1)$ and $\sigma$ is the constant used 
to reduce $\mu_k$, then the rank $r$ is increased  by some fixed $\delta_r>0$ as the procedure 
has not been able to provide the expected  decrease in the primal 
infeasibility.  
After an update of rank, the parameter $\mu_k$ is not changed and 
$\delta_r$ extra columns are appended to the current $U_k$. 
As a safeguard, also a downdating strategy can be implemented. 
In fact, if after an increase of rank, we still have $\rho_k>\eta_1$ 
then we come back to the previous rank and inhibit rank updates in all 
subsequent iterations. 

This is detailed in Algorithm \ref{GF_algo_upd} where we borrowed 
the Matlab notation. Variable {\tt update\_r} is an indicator 
specifying if at the previous iteration the rank was increased 
({\tt update\_r = up}), decreased ({\tt update\_r = down}) 
or left unchanged ({\tt update\_r = unch}).

\begin{algorithm}
\caption{Relaxed Interior Point Algorithm 
         for Low Rank SDP (IPLR)}\label{GF_algo_upd}
\begin{algorithmic}[1]
\Require The initial rank $r$, the rank increment/decrement $\delta_r$, initial $(y_0,U_0)$ 
         with $U_0 \in \IR^{n \times r}$ and $y_0 \in \IR^m$ such that 
         $S_0 = C - {\cal A}^T y_0$ is positive definite, $\mu_0 >0$, $\sigma \in (0,1)$,
         $\eta_1\in (\sigma,1)$, $\eta_2>0$, 
         $\epsilon >0$. 
 \State {\tt update\_r = unch}.
 \For {$\;k=1,2,\dots $}

\State Find an approximate minimizer $(U_k, \bar y_k)$ 
of $\phi_{\mu_k}(U,y)$ such that
$$
\| \nabla \phi_{\mu_k}(U_k,\bar y_k)\| \le \eta_2\, \mu_k
$$
\State If $C - {\cal A}^T y_k $ is positive definite set $\alpha_k=1$ otherwise, 
set $\Delta y=\bar y_k- y_{k-1}$, 
\hspace*{15pt}$\Delta S=-{\cal A}^T \Delta y$ and backtrack along $\Delta S$ 
to ensure $S_k= S_{k-1}+\alpha_k \Delta S$ positive 
\hspace*{13pt} definite.
\State  Set $y_k= y_{k-1}+\alpha_k \Delta y$.
\If{$\mu_k< \epsilon$}
\State \textbf{return} $X_k=U_k U_k ^T+\mu_k I$.
\Else
\State Compute $\rho_k$ given in \eqref{rho}. 

\If{$\rho_k>\eta_1$}
  \If{{\tt update\_r = unch}}
    \State Set $r=r+\delta_r$ and {\tt update\_r = up}
     \State Set $U_k=[U_k,e_{r-\delta_r+1}, \dots, e_{r}]$, 
            where $e_i$ is the $i$-th vector of the \hspace*{55pt} canonical 
            basis for $i = r-\delta_r+1, \dots, r$
  \ElsIf{ {\tt update\_r = up}}
  \State Set $r=r-\delta_r$ and {\tt update\_r = down}
  \State Set $U_k=U_{k-1}[1:r]$ and $y_k=y_{k-1}$ 
  \EndIf
  \Else 
  \State Set {\tt update\_r = unch}  
\EndIf
 \If  {{\tt update\_r = unch}}
 \State Set  $\mu_{k+1}=\sigma  \mu_{k}$
\EndIf

\EndIf

\EndFor
\end{algorithmic}
\end{algorithm}


The initial rank $r$ should be chosen as the rank of the solution 
(if known) or as a small value (say 2 or 3) if it is unknown. 
The dimension of the initial variable $U_0$ is then defined accordingly.
Since, for given $\epsilon$ and $\sigma$, the number of iterations to satisfy $\mu_k < \epsilon$
at Line 6
is predefined, the number of rank updates is predefined as well. 
Therefore, if an estimate of the solution rank is known, 
one should use it in order to define a suitable initial $r$.

\section{Solving the nonlinear least-squares problem}
\label{SolveNLS}

In this section we investigate the numerical solution of the nonlinear 
least-squares problem \eqref{least-square}. 


Following the derivation rules recalled in Appendix \ref{sec:app}, 
we compute the Jacobian matrix $J_{\mu_k} \in \IR^{(n^2+m) \times (nr+m)}$ 
of $ F^r_{\mu_k}$ which takes the following form: 
$$
J_{\mu_k}(U,y) = \left(
      \begin{array}{cc}
      A Q              &    0 \\
      ((C - {\cal A }^Ty) \otimes I_n) Q  &  -(I_n \otimes (\mu_k I_n + U U^T)) A^T
      \end{array} 
      \right),  
$$
where 
\begin{equation}\label{Qk}
Q = (U\otimes I_n) + (I_n \otimes U) \Pi_{nr} \in \IR^{n^2\times nr},
\end{equation}
and $\Pi_{nr} \in \IR^{nr\times nr}$ is the unique permutation matrix 
such that $vec(B^T) = \Pi_{nr} vec(B)$ for any $B \in \IR^{n \times r}$, 
see Appendix \ref{sec:app}.

In order to apply an iterative method for approximately solving \eqref{least-square} 
we need to perform the action of $J_{\mu_k}^T$ on a vector to compute 
the gradient of $\phi_{\mu_k}$. The action of $J_{\mu_k}$ 
on a vector is also required in case one wants to apply 
a Gauss-Newton approach (see Section \ref{sec:gs}).
In the next section we will discuss how these computations are carried out.

\subsection{Matrix-vector products with blocks of $J_{\mu_k}$}
First, let us denote the Jacobian matrix blocks as follows:
\begin{eqnarray}
 J_{11} & = &  A Q = A (( U \otimes I_n) + (I_n \otimes U) \Pi_{nr}) \in\IR^{m \times nr} \\
 J_{21} & = &  ((C - {\cal A }^Ty) \otimes I_n) Q  =  (S \otimes I_n) Q  \in\IR^{n^2 \times nr} \\
 J_{22} & = & - (I_n \otimes (\mu_k I_n + U U^T)) A^T = - (I_n \otimes X) A^T \in\IR^{n^2 \times m} 
\end{eqnarray}
Below we will show that despite $J_{\mu_k}$ blocks contain matrices 
of dimension $n^2\times n^2$, matrix-vector products can be carried 
out without involving such matrices and the sparsity of the constraint 
matrices can be exploited. We will make use of the properties of the 
Kronecker product \eqref{kron1}-\eqref{kron3} and assume that 
if $v \in \IR^{nr}$ and $\tilde z \in \IR^{n^2}$ 
then $ mat(v) \in \IR^{n\times r}$ and $ mat(\tilde z) \in \IR^{n\times n}$.
\begin{itemize}
\item 
Let $v\in \IR^{nr}$ and $w\in \IR^m$ and let us consider the action 
of $J_{11}$ and $J_{11}^T$ on $v$ and $w$, respectively: 
\begin{eqnarray}\label{J11Vk}
   J_{11} v &=& {\cal A} ( mat(v) U^T + U mat(v)^T ) = 
   \left( A_i \bullet V \right)_{i=1}^m
\end{eqnarray}
where 
\begin{equation}\label{Vk}
 V = mat(v) U^T + U mat(v)^T \in \IR^{n\times n}.
\end{equation} 
and 
\begin{eqnarray}
   J_{11}^T w & = & Q^T A^T w= (( U^T \otimes I_n) + \Pi_{nr}^T(I_n \otimes U^T)) A^T w
                    \nonumber \\
              &= &  (( U^T \otimes I_n) + \Pi_{nr}^T(I_n \otimes U^T)) vec ({\cal A}^T w)
                    \nonumber \\
              &= & vec(({\cal A}^T w)U + ({\cal A}^T w)^T U)
                    \nonumber \\
              &= & 2 vec(({\cal A}^T w)U) = 2 vec\left(\sum_{i=1}^m w_i A_i U\right). \label{AU}
\end{eqnarray}

\item 
Let $ v \in \IR^{nr}$ and $\tilde z \in \IR^{n^2}$ and let us consider 
the action of  $J_{21}^TJ_{21}$ and  $J_{21}^T$ on $ v$ and $\tilde z$, 
respectively:
\begin{eqnarray}
   J_{21}^T J_{21} v &=& Q^T (S^2 \otimes I_n) Q v  \nonumber \\
                      &=& vec( (mat( v) U^T + U mat( v)^T ) S^2 U \nonumber \\
                      & & + S^2 (mat( v) U^T + U mat(v)^T)^T U )   
\end{eqnarray}
and
\begin{eqnarray}
   J_{21}^T \tilde z &=& Q^T vec (mat(\tilde z)S) 
                         = vec(mat(\tilde z) S U + S mat(\tilde z)^T U) 
\end{eqnarray}

\item  
Let $w \in \IR^{m}$ and $\tilde z \in \IR^{n^2}$ and let us consider 
the action of $J_{22}$ and $J_{22}^T$ on $w$ and $\tilde z$, respectively:
\begin{eqnarray}
    J_{22} w  &=& -(I \otimes X) A^T w \nonumber \\
                      &=& -vec(X {\cal A }^T w) =- vec(X \sum_{i=1}^m w_i A_i) \label{wA}
\end{eqnarray}
and 
\begin{eqnarray}\label{J22tilZ}
   J_{22}^T \tilde z  &=& -A (I \otimes X) z =- A vec(X mat(\tilde z)) =- {\cal A} (X mat(\tilde z)) 
                   \nonumber \\
               &=&- \left( A_i \bullet \tilde Z \right)_{i=1}^m, 
\end{eqnarray}
with 
\begin{equation}\label{tilZ}
  \tilde Z = (\mu I_n + U U^T)mat(\tilde z).
\end{equation}
   
\end{itemize}

\subsection{Computational effort per iteration}
The previous analysis shows that we can perform all products involving 
Jacobian's blocks handling only $n\times n$ matrices. Moreover, 
if matrices $A_i$ are indeed very sparse their structure can be exploited 
in the matrix-products in \eqref{AU} and \eqref{wA}. 
(Sparsity has been exploited of course in various implementations 
of IPM for SDP, see e.g. \cite{FujisawaEtAl-MP1997}.)
Additionally, only few elements of matrices $V$ in (\ref{Vk}) 
and $\tilde Z$ in (\ref{tilZ}) need to be involved when products 
(\ref{J11Vk}) and (\ref{J22tilZ}) are computed, respectively.
More precisely, denoting with {$nnz(A)$} the number of nonzero entries 
of $A$, we need to compute $nnz(A)$ entries of $V$ and $\tilde Z$ 
defined in \eqref{Vk} and \eqref{tilZ}, respectively. 
Noting that $mat(v) \in \IR^{n\times r}$ and $U^T\in \IR^{r\times n}$, 
the computation of the needed entries of $V$  amounts to $(O(nnz(A)r)$ 
flops. Regarding $\tilde Z$, the computation of the intermediate 
matrix $\hat W= U^T mat(\tilde z) \in \IR^{r \times n}$ costs $O(n^2r)$ 
flops and $nnz(A)$ entries of $U \hat W$ requires $O(nnz(A)r)$ flops.

In Table \ref{table_flop} we provide the estimate flop counts for computing 
various matrix-vector products with the blocks of Jacobian matrix. 
We consider the products that are relevant in the computation of the gradient 
of $\phi_{\mu_k}$ and in handling the linear-algebra phase of the second 
order method which we will introduce in the next section. From the table, 
it is evident that the computation of the gradient of $\phi_{\mu_k}$ 
requires $O(\max\{ nnz(A),n^2\}r+m)$ flops.

\begin{table}[h]
\begin{center}
\begin{small}
\begin{tabular}{l|l}
  Operation & Cost \\
  \hline
  $J_{11} v$          & $O(nnz(A)( r+1))$ \\
  $J_{11}^T w$        & $O(nnz(A)r)$ \\ 
  $J_{21}^T J_{21} v$ & $O(n^2 r)$ \\
  $J_{21}^T \tilde z$ & $O(n^2 r)$ \\
  $J_{22} w$          & $O(n(nnz(A)))$ \\
  $J_{22}^T \tilde z$ & $O(n^2+nnz(A))r$ \\ 
  \hline
\end{tabular}
\end{small}
\end{center}
\caption{Jacobian's block times a vector: number of flops}\label{table_flop}
\end{table}

Below we provide an estimate of a computational effort 
required by the proposed algorithm under mild assumptions: 
\begin{enumerate}
\item $nnz(A)=O(n^2)$, 
\item at Step 3 of Algorithm \ref{GF_algo_upd} a line-search 
first-order method is used to compute an approximate minimizer 
$(U_k, \bar y_k)$ of $\phi_{\mu_k}(U,y)$ such that
$$
\| \nabla \phi_{\mu_k}(U_k,\bar y_k)\| \le \eta_2\, \mu_k.
$$
\end{enumerate}
Taking into account that a line-search first-order method requires in the worst-case
$O(\mu_k^{-2})$ iterations to achieve $\|\nabla \phi (U_k,y_k)\|\le \mu_k$
\cite{grapiglia2017worst}, the computational effort of iteration $k$ 
of Algorithm \ref{GF_algo_upd} is $O(\mu_k^{-2}(n^2 r+m))$ in the worst-case. 
Therefore, when $n$ is large, in the early/intermediate stage of the iterative 
process, this effort is significantly smaller than $O(n^6)$ required 
by a general purpose interior-point solver \cite{AnjosLassere12,deKlerk} 
or $O(n^4)$ needed by the specialized approach for nuclear norm 
minimization \cite{LiuVanden2009}. We stress that this is a worst-case analysis and
in practice we expect to perform less than $O(\mu_k^{-2})$ iterations of the first-order method. 
In case the number of iterations is of the order of $O(n)$   the computational effort per iteration  
of Algorithm \ref{GF_algo_upd}  drops to $O(n^3 r+nm)$.

Apart from all operations listed above the backtracking along $\Delta S$
needs to ensure that $S_k$ is positive definite (Algorithm \ref{GF_algo}, 
Step 4) and this is verified by computing the Cholesky factorization 
of the matrix $S_{k-1}+\alpha_k \Delta S$, for each trial 
steplength $\alpha_k$.  If the dual matrix  is sparse, i.e.  
when matrices $A_i$, $i=1,\ldots,m$ and $C$ share the sparsity 
patterns \cite{VanderbergheAndersen2015}, a sparse Cholesky factor 
is expected. Note that the structure of dual matrix does not change 
during the iterations, hence reordering of $S_0$ 
can be carried out once at the very start of Algorithm \ref{GF_algo_upd}
and then may be reused to compute the Cholesky factorization 
of $S_{k-1}+\alpha_k \Delta S$ at each iteration.
%
%

\subsection{Nonlinear Gauss-Seidel approach}\label{sec:gs}

The crucial step of our interior point framework is the computation 
of an approximate solution of the nonlinear least-squares problem \eqref{least-square}. 
To accomplish the goal, a first-order approach as well as a Gauss-Newton 
method can be used. However, in this latter case the linear algebra phase 
becomes an issue, due to the large dimension of the Jacobian.
Here, we propose  a Nonlinear Gauss-Seidel method.
We also focus on the  linear algebra phase and present a matrix-free implementation   well suited for 
structured constraint matrices as those arising 
in the SDP reformulation of matrix completion problems
\cite{CandesRecht2009}. 
The adopted Nonlinear Gauss-Seidel method 
 to compute $(U_k,\bar y_k)$ at Step 3 of Algorithm \ref{GF_algo_upd}  is detailed in Algorithm \ref{NGS_algo}. 

\begin{algorithm}
\caption{Nonlinear Gauss-Seidel algorithm}\label{NGS_algo}
\begin{algorithmic}[1]
\Require $y_{k-1}$, $U_{k-1}$, $\mu_k$, $\eta_2$ from Algorithm \ref{GF_algo_upd} and $\ell_{max}$.
\State Set $y^0=y_{k-1}$ and $U^0=U_{k-1}$
 \For{$\;\ell=1,2,\dots,\ell_{max}$}
\State 
Set 
\begin{eqnarray*}
 r & = & b - {\cal A} (\mu_k I_n + U^\ell (U^\ell)^T) \\
 R & = & \mu_k I_n - (\mu_k I_n + U^\ell (U^\ell)^T) (C -  {\cal A}^T y^\ell) 
\end{eqnarray*}
 \State Compute a Gauss-Newton step $\Delta U$ for $$\min_U \phi_{\mu_k}(U,y^\ell),$$ 
\hspace*{15pt} that is, solve the linear system 
\begin{equation}\label{sis1}
 [J_{11}^TJ_{11}+ J_{21}^TJ_{21}] vec(\Delta U) = [J_{11}^T J_{21}^T] [r; vec(R)] 
\end{equation}
   \hspace*{15pt} and update $U^{\ell+1}=U^\ell+\Delta U$ 
   and $R = \mu_k I_n - (\mu_k I_n + U^{\ell+1} (U^{\ell+1})^T) (C - {\cal A}^T y^\ell)$
				
 \State Compute a Gauss-Newton step $\Delta y$ for $$\min_y \phi_{\mu_k}(U^{\ell+1},y)$$  
\hspace*{15pt} that is, solve the linear system
 \begin{equation}\label{sis2}
J_{22}^TJ_{22} \Delta y = J_{22}^T vec(R^{})
\end{equation}
          \hspace*{15pt}and update $y^{\ell+1}=y^\ell+\Delta y$.
 \If{ {$\|\nabla \phi_{\mu_k}(U^{\ell+1},y^{\ell+1})\|
\le \eta_2 \mu_k$}}
 \State {\bf return } $U_k=U^{\ell+1}$ and $\bar y_k=y^{\ell+1}$ to Algorithm \ref{GF_algo_upd}.
 \EndIf
 \EndFor
\end{algorithmic}
\end{algorithm}
The computational bottleneck of the procedure given in Algorithm \ref{NGS_algo} 
is the solution of the linear systems \eqref{sis1} and \eqref{sis2}. 
Due to their large dimensions we use a CG-like approach. 
The coefficient matrix in \eqref{sis1} takes the form:
$$
J_{11}^TJ_{11}+ J_{21}^TJ_{21}=Q_k^TA^TA Q_k +
    Q_k^T (S_k^2 \otimes I_n) Q_k = Q_k^T (A^TA + (S_k^2 \otimes I_n)) Q_k 
    \in \IR^{nr \times nr},
$$
and it is positive semidefinite as $Q_k$ may be rank deficient. 
We can apply CG to \eqref{sis1} which is known to converge to the minimum 
norm solution if starting from the null approximation \cite{hestenes75}.
Letting $\bar v \in \IR^{nr}$ be the unitary eigenvector associated 
to the maximum eigenvalue of $Q_k^T (A^TA + (S_k^2 \otimes I_n)) Q_k$ 
and $\bar w=Q_k \bar v$ we have:
\begin{eqnarray*}
\lambda_{max}(Q_k^T (A^TA + (S_k^2 \otimes I_n)) Q_k) 
& =   & \bar w^T (A^TA + (S_k^2 \otimes I_n))) \bar w\\
& \le & \lambda_{max} (A^TA + (S_k^2 \otimes I_n))) \|\bar w\|^2\\
& \le & ((\sigma_{max}(A))^2+(\lambda_{max}(S_k))^2)(\sigma_{max}(Q_k))^2.
\end{eqnarray*}
Moreover, using \ref{Qk} we derive the following bound
\begin{eqnarray*}
\sigma_{max}(Q_k) &=& \sigma_{max}((U_k\otimes I_n) + (I_n \otimes U_k) \Pi_{nr})\\
&\le&  \sigma_{max}(U\otimes I_n) + \sigma_{max} ((I_n \otimes U) \Pi_{nr})\\
&\le&  2 \sigma_{max}(U), 
\end{eqnarray*}
as $\sigma_{max}(U\otimes I_n) =\sigma_{max}(U)$ 
and $\sigma_{max}((U\otimes I_n) )\Pi_{nr})\le\sigma_{max}(U)$.
Since both the maximum eigenvalue of $S_k$ and the maximum singular value 
of $U_k$ are expected to stay bounded from above, we conclude that the 
maximum eigenvalue of $J_{11}^TJ_{11}+ J_{21}^TJ_{21}$ remains bounded.
%
%
The smallest nonzero eigenvalue may go to zero at the same speed 
as $\mu_k^2$.
%
%
However,
 in case of SDP reformulation of matrix completion problems, 
the term $A^TA$ acts as a regularization  term and the smallest nonzero 
eigenvalue of $J_{11}^TJ_{11}+ J_{21}^TJ_{21}$ remains bounded away 
from $\mu_k$ also in the later iterations of the interior point 
algorithm. We will report on this later on,  in the numerical 
results section (see Figure \ref{fig:eig}).

Let us now consider system \eqref{sis2}. 
The coefficient matrix takes the form
\begin{equation}\label{norm-sis-2}
J_{22}^T J_{22} = A (I_n \otimes X_k^2) A^T \in \IR^{m\times m},
\end{equation}
and it is positive definite. We repeat the reasoning applied 
earlier to $J_{11}^TJ_{11}+ J_{21}^TJ_{21}$ and conclude that
$$
\lambda_{max}(J_{22}^T J_{22})\le (\lambda_{max}(X_k))^2(\sigma_{max}(A))^2.
$$
Analogously we have 
$$
\lambda_{min}(J_{22}^T J_{22})\ge (\lambda_{min}(X_k))^2(\sigma_{min}(A))^2.
$$
Taking into account that $r$ eigenvalues of $X_k$ do not depend on $\mu_k$ 
while the remaining are equal to $\mu_k$, we conclude that the condition 
number of $J_{22}^T J_{22}$ increases as ${\cal O}(1/\mu_k^2)$. 
In the next subsection we will show how this matrix can be preconditioned.

\subsection{Preconditioning $J_{22}^TJ_{22}$}
In this subsection we assume that  matrix  $AA^T$ is sparse and easy to invert.
At this regard we underline that in SDP reformulation of matrix-completion problems  
matrices $A_i$ have a very special structure that yields  $AA^T=\frac{1}{2}I_m$.

Note that
substituting $X_k = \mu_k I_n + U_k U_k^T$  in \eqref{norm-sis-2}  we get
\begin{eqnarray}
A (I_n \otimes X_k^2) A^T & = & A ( \mu_k^2 I_{n^2} + 2 \mu_k I_n \otimes U_kU_k^T 
                                 + I_n \otimes (U_kU_k^T)^2 ) A^T \\
& = &  \mu_k^2 AA^T + 2 \mu_k A (I_n \otimes U_kU_k^T) A^T + A(I_n \otimes (U_kU_k^T)^2) A^T \\
& = &  \mu_k^2 AA^T + 2 \mu_k A (I_n \otimes U_kU_k^T) A^T + \nonumber\\
&   &  A (I_n \otimes (U_kU_k^T))(I_n \otimes (U_kU_k^T)) A^T.\end{eqnarray}
Let us consider a preconditioner $P_k$ of the form 
\begin{equation}
\label{prec}
P_k = \mu_k {AA^T } + Z_k  Z_k^T,
\end{equation}
with 
\begin{equation}
\label{z1}
Z_k = A(I_n \otimes (U_kU_k^T)) \in \IR^{m \times n^2}.
\end{equation}
This choice is motivated by the fact that we discard the term 
$I_n \otimes U_kU_k^T$ from the term $2 \mu_k A (I_n \otimes U_kU_k^T) A^T$ 
in the expression of $J_{22}^T J_{22}$.
%
%
In fact, we use the approximation
$$
 \mu_k^2 AA^T + 2 \mu_k A (I_n \otimes U_kU_k^T) A^T\approx \mu_k AA^T. 
$$
A similar idea is used in \cite{zhang2017}. An alternative choice involves matrix $Z_k$ of a smaller dimension 
\begin{equation}
\label{z2}
Z_k = A(I_n \otimes U_k) \in \IR^{m \times nr}.
\end{equation}
This corresponds to introducing a further approximation
$$
 A(I_n \otimes (U_kU_k^T))(I_n \otimes (U_kU_k^T)) A^T\approx  A(I_n \otimes (U_kU_k^T)) A^T.
$$
%
We will analyze spectral properties of the matrix $J_{22}^T J_{22}$ 
preconditioned with $P_k$ defined in (\ref{prec}) with $Z_k$ given in (\ref{z1}).

\begin{theorem}
Let $P_k$ be given in \eqref{prec} with $Z_k$ given in \eqref{z1} 
and $\sigma_{min}(A)$ and $\sigma_{max}(A)$ denote the minimum 
and maximum singular values of $A$, respectively. The eigenvalues 
of the preconditioned matrix $P_k^{-1/2}(A (I \otimes X_k^2) A^T)P_k^{-1/2}$ 
belong to the interval $(1+\xi_1,1+\xi_2)$ 
where $\xi_1$ and $\xi_2$ have the following forms:
$$
\xi_1=\frac{\mu_k(\mu_k-1)(\sigma_{\min}(A))^2}
           {(\sigma_{\max}(A))^2(\mu_k+(\lambda_{\max}(U_kU_k^T))^2)}
$$
and 
$$
\xi_2=\frac{(\sigma_{max}(A))^2(\mu_k+\lambda_{\max} (U_kU_k^T))}
           {(\sigma_{\min}(A))^2}.
$$
\end{theorem}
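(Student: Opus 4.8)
The plan is to bound the eigenvalues of the preconditioned matrix by a standard Rayleigh-quotient argument. Since $P_k$ is symmetric positive definite, the eigenvalues of $P_k^{-1/2}(A(I\otimes X_k^2)A^T)P_k^{-1/2}$ coincide with those of the generalized eigenvalue problem $(A(I\otimes X_k^2)A^T)w = \lambda P_k w$, and these equal the values of the Rayleigh quotient $\frac{w^T A(I\otimes X_k^2)A^T w}{w^T P_k w}$. My strategy is to write the numerator as $w^T P_k w$ plus a remainder, so that each eigenvalue becomes $1$ plus a quotient of the remainder over $w^T P_k w$; I will then bound that quotient above and below to obtain $\xi_2$ and $\xi_1$.

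**The decomposition.**

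First I would use the expansion of $A(I_n\otimes X_k^2)A^T$ already derived in the excerpt, namely
\begin{equation*}
A(I_n\otimes X_k^2)A^T = \mu_k^2 AA^T + 2\mu_k A(I_n\otimes U_kU_k^T)A^T + Z_kZ_k^T,
\end{equation*}
where $Z_k = A(I_n\otimes U_kU_k^T)$, together with the definition $P_k = \mu_k AA^T + Z_kZ_k^T$. Subtracting gives the remainder
\begin{equation*}
A(I_n\otimes X_k^2)A^T - P_k = (\mu_k^2-\mu_k)AA^T + 2\mu_k A(I_n\otimes U_kU_k^T)A^T.
\end{equation*}
Thus each eigenvalue has the form $1 + \delta$ where $\delta = \frac{w^T\left[(\mu_k^2-\mu_k)AA^T + 2\mu_k A(I_n\otimes U_kU_k^T)A^T\right]w}{w^T P_k w}$, and the task reduces to bounding $\delta$ from both sides.

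**Bounding the quotient.**

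For the denominator I would bound $w^T P_k w \ge \mu_k\,\sigma_{\min}(A)^2\|w\|^2$ from below (discarding the positive semidefinite $Z_kZ_k^T$ term and using $w^T AA^T w \ge \sigma_{\min}(A)^2\|w\|^2$), and bound $w^T P_k w \le \left(\mu_k + \lambda_{\max}(U_kU_k^T)^2\right)\sigma_{\max}(A)^2\|w\|^2$ from above, since $\|Z_k\|_2 \le \sigma_{\max}(A)\,\lambda_{\max}(U_kU_k^T)$ from the Kronecker-product norm identities used earlier in the paper. For the numerator of $\delta$ I would treat the two terms separately: the factor $(\mu_k^2-\mu_k) = \mu_k(\mu_k-1)$ is negative for $\mu_k\in(0,1)$, which is exactly what produces the negative lower bound $\xi_1$, while the $2\mu_k A(I_n\otimes U_kU_k^T)A^T$ term is positive semidefinite and is bounded above using $\|I_n\otimes U_kU_k^T\|_2 = \lambda_{\max}(U_kU_k^T)$. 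Matching the worst-case numerator against the appropriate denominator bound in each direction yields the stated $\xi_1$ and $\xi_2$.

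**The main obstacle.**

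The delicate point is getting the asymmetric matchup of numerator and denominator bounds right so that the published $\xi_1$ and $\xi_2$ emerge exactly, rather than some looser constants. For the lower bound $\xi_1$ the numerator is dominated by the negative term $\mu_k(\mu_k-1)AA^T$ (the other term is nonnegative and can only help), so I pair its most negative value $\mu_k(\mu_k-1)\sigma_{\min}(A)^2$ with the largest possible denominator $(\mu_k+\lambda_{\max}(U_kU_k^T)^2)\sigma_{\max}(A)^2$. For the upper bound $\xi_2$ I note that the published form keeps only the second (positive) numerator term paired with the smallest denominator $\mu_k\sigma_{\min}(A)^2$, which cancels a factor of $\mu_k$ and leaves $\frac{2\sigma_{\max}(A)^2\lambda_{\max}(U_kU_k^T)}{\sigma_{\min}(A)^2}$; reconciling the factor of $2$ with the stated numerator $\mu_k+\lambda_{\max}(U_kU_k^T)$ (and confirming whether the negative term is absorbed) is the one bookkeeping step I would verify carefully, since that is where any discrepancy between a clean derivation and the typeset constant would surface.
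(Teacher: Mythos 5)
Your proposal takes essentially the same route as the paper's proof: the identical splitting $A(I_n\otimes X_k^2)A^T = P_k + \mu_k\left[(\mu_k-1)AA^T + 2A(I_n\otimes U_kU_k^T)A^T\right]$, the identical extreme bounds $\lambda_{\min}(P_k)\ge \mu_k(\sigma_{\min}(A))^2$ and $\|P_k\|_2\le(\sigma_{\max}(A))^2\left(\mu_k+(\lambda_{\max}(U_kU_k^T))^2\right)$, and the identical pairing of numerator and denominator bounds, with the paper merely phrasing your Rayleigh-quotient argument as Weyl's inequality applied after conjugation by $P_k^{-1/2}$. The bookkeeping concerns you raise are genuine, but they are the paper's own: its Weyl step bounds the contribution of $2A(I_n\otimes U_kU_k^T)A^T$ by $(\sigma_{\max}(A))^2\lambda_{\max}(U_kU_k^T)$, silently dropping the factor $2$ that would otherwise appear in $\xi_2$, and its lower bound multiplies the estimate $\lambda_{\min}\!\left(P_k^{-1/2}AA^TP_k^{-1/2}\right)\ge(\sigma_{\min}(A))^2/\|P_k\|_2$ by the (for $\mu_k<1$) negative scalar $\mu_k-1$ without flipping the inequality, which is exactly the ``largest denominator'' pairing your sketch reproduces for $\xi_1$.
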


\begin{proof}
Note that
$$
A (I \otimes X_k^2) A^T =  \mu_k(\mu_k-1) AA^T + 2 \mu_k A (I_n \otimes U_kU_k^T) A^T + P_k.
$$
Then,
$$
P_k^{-1/2}(A (I \otimes X_k^2) A^T)P_k^{-1/2} 
= I + \mu_k P_k^{-1/2}((\mu_k-1)AA^T+2 A (I_n \otimes U_kU_k^T) A^T) P_k^{-1/2}.
$$
Let us denote with $\lambda_M$ and $\lambda_m$ the largest and 
the smallest eigenvalues of matrix \\ 
$P_k^{-1/2}((\mu_k-1)AA^T+2 A (I_n \otimes U_kU_k^T) A^T) P_k^{-1/2}$, 
respectively.
From (\ref{prec}) we deduce  
$$
\lambda_{\min}(P_k) \ge \mu_k (\sigma_{\min}(A))^2 
$$ 
and 
$$
\lambda_{\max}(A (I_n \otimes U_kU_k^T) A^T) \le (\sigma_{max}(A))^2 \lambda_{\max} (U_kU_k^T).
$$
Then, using the Weyl inequality we obtain
$$
\lambda_M \le \frac{(\sigma_{max}(A))^2(\mu_k + \lambda_{\max} (U_kU_k^T))}
            {\mu_k (\sigma_{\min}(A))^2}.
$$
Moreover, 
$$
\lambda_{\min}(P_k^{-1/2} AA^TP_k^{-1/2})=\frac{1}{\lambda_{\max}(P_k^{1/2} (AA^T)^{-1}P_k^{1/2})}
\ge \frac{(\sigma_{\min} (A))^2}{\|P_k\|_2}. 
$$
Then, noting that $\|P_k\|_2 \le (\sigma_{\max} (A))^2( \mu_k +(\lambda_{\max}(U_kU_k^T))^2)$,
we have 
$$
\lambda_m \ge \frac{ (\mu_k-1) (\sigma_{\min}(A))^2}
                     {(\sigma_{\max} (A))^2 (\mu_k+(\lambda_{\max}(U_kU_k^T))^2)}.
$$
Consequently, the eigenvalues of the preconditioned 
matrix $P_k^{-1/2}(A (I \otimes X^2) A^T)P_k^{-1/2}$
belong to the interval $(1+\xi_1,1+\xi_2)$, and the theorem follows.
\end{proof}

Note that from the result above, as $\mu_k$ approaches zero, the minimum eigenvalue 
of the preconditioned matrix goes to one 
and the maximum remains bounded.

The application of $P_k$ to a vector $d$, needed at each CG iteration,
can be performed through the solution of  the $(m+nq) \times (m+nq)$
sparse augmented system:
\begin{equation}
\label{aug}
\begin{bmatrix}
\mu_k AA^T & Z_k\\
Z_k^T & -I_{nr} 
\end{bmatrix} 
\begin{bmatrix}
  u \\
  v
\end{bmatrix}
  =
\begin{bmatrix}
  d \\
  0
\end{bmatrix}.
\end{equation}
where if $Z_k$ is given by (\ref{z1}) $q=n$,
while $q=r$ in case (\ref{z2}).
In order to recover the vector $u=P_k^{-1}d$, we can solve the linear system 
\begin{equation}\label{sis_precon}
  (I_{nr} + Z_k^T ( \mu _kA A^T )^{-1} Z_k) v = Z_k^T ( \mu_k A A^T )^{-1} d,
\end{equation}
and  compute $u$ as follows
$$
  u = ( \mu_k A A^T )^{-1} (d - Z_k v).
$$
This process involves the inversion of $A A^T$ which can be done once 
at the beginning of the iterative process, and the solution of a linear 
system with matrix 
$$E_k =  I+Z_k^T(\mu_k AA^T)^{-1}Z_k.$$ 
Note that $E_k$ has dimension $n^2 \times n^2$ in case of choice (\ref{z1}) 
and dimension $nr \times nr$ in case of choice (\ref{z2}). Then, its inversion 
is impractical in case (\ref{z1}). On the other hand, using (\ref{z2}) 
we can approximately solve \eqref{sis_precon} using a CG-like solver. 

At this regard, observe that the entries of $E_k$ decrease when 
far away from the main diagonal and $E_k$ can be preconditioned 
by its block-diagonal part, that is by 
\begin{equation}\label{prec-prec}
M_k = I_{nr} + {\cal B} (Z_k^T ( \mu_k A A^T )^{-1} Z_k),
\end{equation}
where ${\cal B}$ is the operator that extracts from a matrix 
$nr \times nr$ its block diagonal part with $n$ diagonal blocks 
of size $r\times r$.

\section{SDP reformulation of matrix completion problems}\label{sec:mc}
We consider the problem of recovering a low-rank data matrix 
$B \in \IR^{\hat n \times \hat n}$ from a sampling of its 
entries \cite{CandesRecht2009}, that is the so called 
{\em matrix completion} problem. The problem can be stated as
\begin{equation}\label{matcompl}
\begin{array}{ll}
\min & rank(\bar X) \\[0.2cm]
\mbox{s.t. } & \bar X_{\Omega} = B_{\Omega},  
\end{array}
\end{equation} 
where $\Omega$ is the set of locations corresponding to the observed 
entries of $B$ and the equality is meant element-wise, 
that is $X_{s,t} = B_{s,t}, \mbox{ for all } (s,t) \in \Omega$.
Let $m$ be the cardinality of $\Omega$ and $r$ be the rank of $B$.

A popular convex relaxation of the problem \cite{CandesRecht2009} 
consists in finding the minimum nuclear norm of $\bar X$ that satisfies 
the linear constraints in (\ref{matcompl}), that is, solving 
the following heuristic optimization
\begin{equation}\label{matcompl-nn}
\begin{array}{ll}
\min & \|\bar X \|_* \\[0.2cm]
\mbox{s.t. } & \bar X_{\Omega} = B_{\Omega}, 
\end{array}
\end{equation} 
where the nuclear norm $\|\cdot \|_*$ of $\bar X$ is defined 
as the sum of its singular values.


Cand\`es and Recht proved in \cite{CandesRecht2009} that if $\Omega$ 
is sampled uniformly at random among all subset of cardinality $m$ 
%
%
then with large probability, the unique solution to (\ref{matcompl-nn}) 
is exactly $B$, provided that the number of samples obeys 
$m \ge C \hat n^{5/4} r \log \hat n$, for some positive numerical constant $C$.
In other words, problem (\ref{matcompl-nn}) is ``formally equivalent'' 
to problem (\ref{matcompl}). 
Let
\begin{equation}\label{x_mc}
X=\begin{bmatrix}
W_1 & \bar X \\
\bar X^T & W_2
\end{bmatrix},
\end{equation}
where $\bar X \in \IR^{\hat n\times \hat n}$ is the matrix 
to be recovered and $W_1, W_2 \in S\IR^{\hat n\times \hat n}$. 
Then problem (\ref{matcompl-nn}) can be stated as an SDP 
of the form (\ref{sdp_primal}) as follows  
\begin{equation}\label{matcompl-sdp}
\begin{array}{ll}
\min & \frac{1}{2} I \bullet X \\[0.2cm]
\mbox{s.t. } & \begin{bmatrix}
0 & \Theta_{st} \\
\Theta_{st}^T & 0
\end{bmatrix} \bullet X = B_{(s,t)}, \quad (s,t) \in \Omega \\
[0.5cm]
& X\succeq 0, 
\end{array}
\end{equation} 
where for each $(s,t) \in \Omega$ the matrix
$\Theta_{st} \in \IR^{\hat n \times \hat n}$ is defined element-wise 
for $k,l = 1, \dots, \hat n$ as
$$
(\Theta_{st} )_{kl}= \left \{  \begin{array}{ll}
1/2 & \mbox{ if } (k,l) = (s,t) \\
0 & \mbox{ otherwise, }
\end{array}
\right.
$$
see \cite{RechtFazelParrilo10}. 
We observe that primal variable $X$ takes the form \eqref{x_mc} 
with $n= 2\hat n$, the symmetric matrix $C$ in the objective 
of (\ref{sdp_primal}) is a scaled identity matrix of dimension
$n \times n$. The vector $b\in \IR^m$ is defined by the known 
elements of $B$ and,  for $i=1,\ldots,m$, each constraint matrix $A_i$,  
corresponds to the known elements of $B$ stored in $b_i$. 
Matrices $A_i$ have a very special structure that yields nice 
properties in the packed matrix $A$. 
 Since every constraint matrix 
has merely two nonzero entries the resulting matrix $A$ 
has $2m$ nonzero elements and its density is equal to $2n^{-2}$.
Moreover,  $AA^T=\frac{1}{2}I_m$ 
and $\|{\cal A} (I_n)\|_2 = 0$.

We now discuss the relationship between a rank $r$ solution $\bar X$ 
of problem (\ref{matcompl-nn}) and a rank $r$ solution $X$ 
of problem (\ref{matcompl-sdp}).

\begin{proposition} 
If $X$  of the form 
$\begin{bmatrix} 
W_1 & \bar X \\
\bar X^T & W_2
\end{bmatrix}$ 
with $\bar X \in \IR^{\hat n\times \hat n}$ 
and $W_1, W_2 \in S\IR^{\hat n\times \hat n}$ 
has rank $r$, then $\bar X$ has rank $r$. \\
Vice-versa, if $\bar X$ has rank $r$ 
with $\bar X \in \IR^{\hat n\times \hat n}$, 
then there exist $W_1, W_2 \in S\IR^{\hat n\times \hat n}$ such that
$\begin{bmatrix} 
W_1 & \bar X \\
\bar X^T & W_2
\end{bmatrix}$ has rank $r$.
\end{proposition}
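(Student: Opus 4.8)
The plan is to handle the two implications separately, the converse being the constructive and substantive half. I would use throughout that a solution $X$ of \eqref{matcompl-sdp} is symmetric and positive semidefinite; this is the structural property that makes the statement plausible, since for an arbitrary symmetric block matrix the conclusion fails (for instance $\begin{bmatrix}1 & 1\\ 1 & -1\end{bmatrix}$ has rank $2$ while its off-diagonal block has rank $1$).

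For the forward implication I would first record the elementary half. Since $\bar X$ is the top-right $\hat n\times\hat n$ submatrix of $X$ and the rank of a submatrix never exceeds the rank of the whole matrix, $\mathrm{rank}(\bar X)\le\mathrm{rank}(X)=r$. To obtain the reverse inequality I would pass to a symmetric factorization $X=FF^T$ with $F=\begin{bmatrix}F_1\\ F_2\end{bmatrix}$, $F_1,F_2\in\IR^{\hat n\times r}$ and $F$ of full column rank $r$ (available because $X\succeq 0$ has rank $r$), so that $\bar X=F_1F_2^T$, $W_1=F_1F_1^T$ and $W_2=F_2F_2^T$. I expect the reverse inequality $\mathrm{rank}(\bar X)\ge r$ to be the delicate point: it does not follow from $X\succeq 0$ alone, since one can exhibit a full-column-rank $F$ with $F_1F_2^T=0$. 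The natural way to close the gap is to invoke the extremality of an SDP solution, namely that an optimal $X$ minimises $\mathrm{trace}(W_1)+\mathrm{trace}(W_2)=2\|\bar X\|_*$ over all positive semidefinite completions of $\bar X$; the minimal-trace completion is precisely the one built from the singular subspaces of $\bar X$, which has rank equal to $\mathrm{rank}(\bar X)$, forcing $\mathrm{rank}(\bar X)=\mathrm{rank}(X)=r$.

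For the converse I would argue by explicit construction. Take a thin SVD $\bar X=P\Sigma Q^T$ with $P,Q\in\IR^{\hat n\times r}$ having orthonormal columns and $\Sigma=\mathrm{diag}(\sigma_1,\dots,\sigma_r)\succ 0$. Set $F=\begin{bmatrix}P\Sigma^{1/2}\\ Q\Sigma^{1/2}\end{bmatrix}\in\IR^{n\times r}$ and $X=FF^T$, i.e. $W_1=P\Sigma P^T$ and $W_2=Q\Sigma Q^T$, both symmetric. A direct computation gives the $(1,2)$ block $P\Sigma^{1/2}\Sigma^{1/2}Q^T=P\Sigma Q^T=\bar X$, so $X$ has the required form, while $F^TF=\Sigma^{1/2}(P^TP+Q^TQ)\Sigma^{1/2}=2\Sigma$ is nonsingular, whence $\mathrm{rank}(X)=\mathrm{rank}(F)=r$. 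This produces the desired $W_1,W_2$.

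The main obstacle is thus the equality in the forward implication: the submatrix argument only yields $\mathrm{rank}(\bar X)\le r$, and upgrading it to an equality genuinely requires the positive semidefiniteness of $X$ together with the fact that $X$ is an optimal (extreme) SDP solution rather than an arbitrary rank-$r$ completion. I would verify that \emph{solution} is intended in this optimal sense, since the stated equality can otherwise fail.
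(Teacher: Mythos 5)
Your proof of the converse implication is essentially identical to the paper's: the paper takes the SVD $\bar X = Q\Sigma V^T$ and sets $W_1=Q\Sigma Q^T$, $W_2=V\Sigma V^T$, so that $X=\bigl[\begin{smallmatrix}Q\\ V\end{smallmatrix}\bigr]\Sigma\bigl[\begin{smallmatrix}Q^T & V^T\end{smallmatrix}\bigr]$; your $X=FF^T$ with $F=\bigl[\begin{smallmatrix}P\Sigma^{1/2}\\ Q\Sigma^{1/2}\end{smallmatrix}\bigr]$ is the same construction, and your computation $F^TF=2\Sigma$ just makes explicit the rank count that the paper leaves implicit (namely that the stacked factor has full column rank).

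On the forward implication your skepticism is exactly right, and the gap you identify sits in the paper's own proof, not only in your attempt. The paper writes $X=Q\Sigma Q^T$ with $Q\in\IR^{2\hat n\times r}$ (thereby already assuming $X\succeq 0$, which the statement of the proposition never requires), partitions $Q=[Q_1;Q_2]$, obtains $\bar X=Q_1\Sigma Q_2^T$, and then simply \emph{asserts} that this matrix has rank $r$ --- which is precisely the step you refused to take. The assertion is false in general: your indefinite example $\bigl[\begin{smallmatrix}1 & 1\\ 1 & -1\end{smallmatrix}\bigr]$ works, and so does the positive semidefinite example $X=I_{2\hat n}$ (i.e.\ $W_1=W_2=I_{\hat n}$, $\bar X=0$), which has rank $2\hat n$ while its off-diagonal block has rank $0$. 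The same phenomenon affects Corollary \ref{cor} as stated: for $U\in\IR^{4\times 2}$ with columns $e_1,e_3$, the matrix $U$ has full column rank, yet the off-diagonal block of $UU^T+\mu I$ is zero. Your proposed repair --- assume $X$ is an optimal solution of \eqref{matcompl-sdp}, hence a minimum-trace positive semidefinite completion of its own off-diagonal block, and extract rank equality from the equality case of the chain $\|F_1F_2^T\|_*\le\sum_i\sigma_i(F_1)\sigma_i(F_2)\le\|F_1\|_F\|F_2\|_F\le\tfrac12\bigl(\|F_1\|_F^2+\|F_2\|_F^2\bigr)$ --- is sound and can be completed (equality forces $F_1,F_2$ to share singular values and compatible singular vectors, whence $\mathrm{rank}(X)=\mathrm{rank}(\bar X)$), but two caveats apply: that equality analysis still has to be written out in full, and the repaired statement carries strictly stronger hypotheses than the paper's, so it would no longer justify the paper's application of the proposition to the algorithm's iterates $X_k=U_kU_k^T+\mu_k I$ in Corollary \ref{cor}, which are not optimal solutions. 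In short: your converse matches the paper; for the forward direction your diagnosis is correct, and the paper's proof asserts, rather than proves, the key rank equality.
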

\begin{proof}
Let $X = Q \Sigma Q^T$ with $Q \in \IR^{2 \hat n\times r}$ and
$\Sigma = \IR^{r \times r}$ be the singular value decomposition 
(SVD) of $X$. Let $Q$ be partitioned by 
$Q = \begin{bmatrix} 
Q_1 \\
Q_2
\end{bmatrix}$ 
with $Q_1, Q_2 \in \IR^{ \hat n\times r}$.
Then
$$
X = \begin{bmatrix} 
Q_1 \\
Q_2
\end{bmatrix} \Sigma  \begin{bmatrix} 
Q_1^T & Q_2^T
\end{bmatrix} = \begin{bmatrix} 
Q_1 \Sigma Q_1^T & Q_1 \Sigma Q_2^T    \\
Q_2 \Sigma Q_1^T & Q_2 \Sigma Q_2^T 
\end{bmatrix},
$$
that is $\bar X=Q_1\Sigma Q_2^T$ has rank $r$.

To prove the second part of the proposition, 
let $\bar X = Q \Sigma V^T$ with $Q, V\in \IR^{ \hat n\times r}$ 
and $\Sigma = \IR^{r\times r}$ be the SVD factorization of $\bar X$.  
We get the proposition by defining $W_1 = Q \Sigma Q^T$ 
and $W_2 = V \Sigma V^T$ and obtaining 
$X=\begin{bmatrix} 
Q\\
V
\end{bmatrix} \Sigma \begin{bmatrix} 
Q^T & V^T
\end{bmatrix} .
$
\end{proof}
\begin{corollary}\label{cor}
Let $X$ structured as 
$\begin{bmatrix} 
W_1 & \bar X \\
\bar X^T & W_2
\end{bmatrix}$ 
with $\bar X \in \IR^{\hat n\times \hat n}$ and 
$W_1, W_2 \in S\IR^{\hat n\times \hat n}$. Assume that $X$ has the form
$$
X = UU^T + \mu I,
$$
with $U \in \IR^{n\times r}$ full column rank and $\mu \in \IR$, 
then $\bar X$ has rank r.
\end{corollary}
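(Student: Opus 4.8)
The plan is to reduce the statement to the preceding Proposition by passing to the shifted matrix $Y := X - \mu I_n = U U^T$. Since $X$ itself is full rank whenever $\mu \neq 0$, the Proposition cannot be applied to $X$ directly; the key observation is that subtracting the barrier term $\mu I_n$ recovers a rank-$r$ matrix carrying the same block structure as required.

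First I would note that, because $U \in \IR^{n \times r}$ has full column rank, $rank(U U^T) = rank(U) = r$, so $rank(Y) = r$. Next I would check that $Y$ inherits the block form required by the Proposition. Writing $X = \begin{bmatrix} W_1 & \bar X \\ \bar X^T & W_2 \end{bmatrix}$ and recalling that here $n = 2\hat n$, subtraction of the block-diagonal matrix $\mu I_n$ affects only the diagonal blocks, giving
$$
Y = \begin{bmatrix} W_1 - \mu I_{\hat n} & \bar X \\ \bar X^T & W_2 - \mu I_{\hat n} \end{bmatrix}.
$$
The off-diagonal block is still $\bar X$, while the diagonal blocks $W_1 - \mu I_{\hat n}$ and $W_2 - \mu I_{\hat n}$ remain symmetric. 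Hence $Y$ is a symmetric matrix of rank $r$ whose off-diagonal block is $\bar X$ and whose diagonal blocks are symmetric, which is precisely the hypothesis of the first part of the Proposition.

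Applying that Proposition to $Y$ then yields at once that $\bar X$ has rank $r$, completing the argument. The only delicate point is the initial reduction: one must invoke the rank-transfer result on $U U^T$ rather than on $X$, and observe that the shift $\mu I_n$, being block-diagonal, leaves both the off-diagonal block $\bar X$ and the symmetry of the diagonal blocks untouched. Once this is recognized, no further computation is needed.
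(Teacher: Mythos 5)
Your proof is correct and follows exactly the route the paper intends: the Corollary is stated as an immediate consequence of the preceding Proposition, the implicit argument being precisely your reduction, namely that $Y = X - \mu I_n = UU^T$ has rank $r$ (since $U$ has full column rank) and, because $\mu I_n$ is block-diagonal, $Y$ retains the required block structure with the same off-diagonal block $\bar X$, so the first part of the Proposition applies. No gap beyond what the paper itself assumes.
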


\begin{proposition}\label{lem:sol}
If $X$ is a rank $r$ solution of (\ref{matcompl-sdp}), then $\bar X$ 
is a rank $r$ solution of (\ref{matcompl-nn}).
Vice-versa, if $\bar X$ is a rank $r$ solution of (\ref{matcompl-nn}), 
then (\ref{matcompl-sdp}) admits a rank $r$ solution.
\end{proposition}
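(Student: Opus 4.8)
The plan is to reduce both implications to the classical semidefinite characterization of the nuclear norm underlying the reformulation \eqref{matcompl-sdp} (see \cite{RechtFazelParrilo10}), namely that for every $\bar X \in \IR^{\hat n \times \hat n}$
$$
\|\bar X\|_* = \min_{W_1,W_2}\left\{ \tfrac{1}{2}\big(\mathrm{trace}(W_1)+\mathrm{trace}(W_2)\big)\ :\ \begin{bmatrix} W_1 & \bar X \\ \bar X^T & W_2 \end{bmatrix} \succeq 0 \right\}.
$$
First I would record the routine fact that the linear constraint $\begin{bmatrix} 0 & \Theta_{st} \\ \Theta_{st}^T & 0 \end{bmatrix}\bullet X = B_{(s,t)}$ evaluates, for $X$ of the block form \eqref{x_mc}, to exactly $\bar X_{st}=B_{st}$; hence a matrix $X$ of form \eqref{x_mc} is feasible for \eqref{matcompl-sdp} if and only if $X\succeq 0$ and its off-diagonal block $\bar X$ is feasible for \eqref{matcompl-nn}. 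Writing $p^\star_{\mathrm{SDP}}$ and $p^\star_{\mathrm{NN}}$ for the two optimal values and using $\tfrac12 I\bullet X = \tfrac12(\mathrm{trace}(W_1)+\mathrm{trace}(W_2))$, splitting the minimization over $X$ into an outer minimization over feasible $\bar X$ and an inner one over the completing blocks $W_1,W_2$ gives
$$
p^\star_{\mathrm{SDP}} = \min_{\bar X_\Omega = B_\Omega}\ \min_{W_1,W_2:\, X \succeq 0} \tfrac12\big(\mathrm{trace}(W_1)+\mathrm{trace}(W_2)\big) = \min_{\bar X_\Omega = B_\Omega} \|\bar X\|_* = p^\star_{\mathrm{NN}}.
$$
This equality of optimal values is the hinge of the whole argument.

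For the first implication, let $X^\star$ with off-diagonal block $\bar X^\star$ be a rank $r$ optimizer of \eqref{matcompl-sdp}. Its feasibility gives $\bar X^\star_\Omega = B_\Omega$, so $\bar X^\star$ is feasible for \eqref{matcompl-nn}; the characterization above yields $\|\bar X^\star\|_* \le \tfrac12 I\bullet X^\star = p^\star_{\mathrm{SDP}} = p^\star_{\mathrm{NN}}$, while feasibility forces $\|\bar X^\star\|_* \ge p^\star_{\mathrm{NN}}$, so $\bar X^\star$ is optimal for \eqref{matcompl-nn}. That $\bar X^\star$ has rank $r$ is immediate from the first part of the Proposition relating the rank of the block matrix $X$ to that of $\bar X$.

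For the converse, let $\bar X^\star$ be a rank $r$ optimizer of \eqref{matcompl-nn} with (thin) SVD $\bar X^\star = Q\Sigma V^T$. Following the construction in the second part of that same Proposition, I would set $W_1 = Q\Sigma Q^T$ and $W_2 = V\Sigma V^T$, so that $X = \begin{bmatrix} Q \\ V\end{bmatrix}\Sigma\begin{bmatrix} Q^T & V^T\end{bmatrix} \succeq 0$ has block form \eqref{x_mc} and rank $r$. This $X$ is feasible for \eqref{matcompl-sdp} because its off-diagonal block is $\bar X^\star$, and using $Q^TQ = V^TV = I_r$ one computes $\tfrac12 I\bullet X = \mathrm{trace}(\Sigma) = \|\bar X^\star\|_* = p^\star_{\mathrm{NN}} = p^\star_{\mathrm{SDP}}$; hence $X$ is a rank $r$ solution of \eqref{matcompl-sdp}.

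The only genuinely non-trivial ingredient is the semidefinite characterization of the nuclear norm, which delivers the equality $p^\star_{\mathrm{SDP}} = p^\star_{\mathrm{NN}}$; everything else is bookkeeping, namely translating the linear constraints into $\bar X_{st}=B_{st}$, reading off the ranks from the preceding Proposition, and evaluating the traces of the SVD factors. I would therefore make the value-equality step fully precise and present the two implications as short consequences.
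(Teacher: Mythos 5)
Your proof is correct and takes essentially the same route as the paper: both hinge on the Fazel--Hindi--Boyd semidefinite characterization of the nuclear norm (the paper cites it as Lemma~1 of \cite{FazelHindiBoyd01}) and use the identical SVD-based construction $W_1 = Q\Sigma Q^T$, $W_2 = V\Sigma V^T$ for the converse direction. The only difference is organizational: you make the equivalence explicit as an equality of optimal values and argue both implications directly, whereas the paper cites the equivalence for the first implication and proves optimality in the second by contradiction.
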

\begin{proof}
The first statement follows from the equivalence between problems 
(\ref{matcompl-sdp}) and (\ref{matcompl-nn}) 
\cite[Lemma 1]{FazelHindiBoyd01}.

Let $\bar X$ be a rank $r$ optimal solution of (\ref{matcompl-nn}), 
$t^* = \|\bar X\|_*$ and $Q \Sigma V^T$,
with $Q, V\in \IR^{ \hat n\times r}$ and $\Sigma \in  \IR^{r\times r}$, 
be the SVD decomposition of $\bar X$.  
Let us define 
$X = \begin{bmatrix} 
W_1 & \bar X \\
\bar X^T & W_2
\end{bmatrix}$ 
with $W_1 = Q \Sigma Q^T$ and $W_2 = V \Sigma V^T$. 
Then $X$ solves (\ref{matcompl-sdp}).
In fact, $X$ is positive semidefinite and 
$\frac{1}{2}I \bullet X = \frac{1}{2}(Trace(W_1)+Trace(W_2)) = \|\bar X\|_* = t^*$.
This implies that $t^*$ is the optimal value of (\ref{matcompl-sdp}). 
In fact, if we had $Y$ such that 
$$
\begin{bmatrix}
0 & \Theta_{st}  \\
\Theta_{st}^T & 0
\end{bmatrix} 
\bullet Y = B_{(s,t)}, \quad (s,t) \in \Omega \quad\quad
Y\succeq 0
$$
and $\frac{1}{2}I \bullet Y \le t^\star$, 
then by \cite[Lemma 1]{FazelHindiBoyd01} 
there would exist $\bar Y$ such that $ \|\bar Y\|_* < t^*$, 
that is $ \|\bar Y\|_* <\|\bar X\|_*=t^*$.
This is a contradiction as we assumed that $t^*$ is the optimal 
value of (\ref{matcompl-nn}). 
\end{proof}

{\em Remark.}  
Assuming that a rank $r$ solution to (\ref{matcompl-nn}) exists, 
the above analysis justifies the application of our algorithm to search 
for a rank $r$ solution of the SDP reformulation (\ref{matcompl-sdp}) 
of (\ref{matcompl-nn}).
We also observe that at each iteration our algorithm computes 
an approximation $X_k$ of the form $X_k=U_kU_k^T+\mu_k I_n$ 
with $U_k\in  \IR^{n\times r}$ and $\mu_k>0$.
Then, if at each iteration $U_k$ is full column rank, 
by Corollary \ref{cor}, it follows that we generate a sequence 
$\{\bar X_k\}$ such that $\bar X_k$ has exactly rank $r$ at each 
iteration $k$ and it approaches a solution of (\ref{matcompl-nn}).  

Finally, let us observe that $m<{\hat n}^2=n^2/4$ {\color{black} and $nnz(A)=2m<n^2/2$.}
Then, by the analysis carried out in Subsection 4.1 each evaluation 
of the gradient of $\phi_{\mu_k}$ amounts to $O(n^2r)$ flops and 
assuming to use a 
first-order method at each iteration to compute 
$(U_k,\bar y_k)$, in the worst-case each iteration of our method requires $O(\mu_k^{-2}n^2r)$ flops.

\section{Numerical experiments on matrix completion problems}\label{sec:exp}

We consider an application to matrix completion problems by solving 
(\ref{matcompl-sdp}) with our relaxed Interior Point algorithm 
for Low-Rank SDPs (IPLR), described in Algorithm \ref{GF_algo_upd}. 
IPLR has been implemented using Matlab (R2018b) and all experiments 
have been carried out on Intel Core i5 CPU 1.3 GHz with 8 GB RAM.
Parameters in Algorithm \ref{GF_algo_upd} have been chosen as follows:
$$
\mu_0 = 1,\ \sigma = 0.5,\ \eta_1 = 0.9,\ \eta_2 = \sqrt{n},
$$
while the starting dual feasible approximation has been chosen 
as $y_0=0, S_0=\frac{1}{2}I_n$ and $U_0$ is defined 
by the first $r$ columns of the identity matrix $I_n$.

{\color{black}
We considered two implementations of IPLR which differ with the strategy used to
find a minimizer of $\phi_{\mu_k}(U,y)$ (Line 3 of Algorithm \ref{GF_algo_upd}).}

Let {\sc IPLR-GS} denote the implementation of IPLR where 
the Gauss-Seidel strategy described in Algorithm \ref{NGS_algo} 
is used to find a minimizer of $\phi_{\mu_k}(U,y)$. 
We impose a maximum number of 5 $\ell$-iterations and use 
the (possibly) preconditioned conjugate gradient method 
to solve the linear systems (\ref{sis1}) and (\ref{sis2}).
We set a maximum of 100 CG iterations and the tolerance $10^{-6}$ 
on the relative residual of the linear systems. System (\ref{sis1}) 
is solved with unpreconditioned CG. Regarding \eqref{sis2}, 
for the sake of comparison, we report in the next section 
statistics using unpreconditioned 
CG and CG employing  the preconditioner defined by (\ref{prec}) 
and (\ref{z2}). In this latter case the action of the preconditioner 
has been implemented through the augmented system \eqref{aug}, 
following the procedure outlined at the end of Section 5.
The linear system \eqref{sis_precon} has been solved by preconditioned CG, 
with preconditioner \eqref{prec-prec} allowing a maximum of 100 CG 
iterations and using a tolerance $10^{-8}$.
In fact, the linear system  (\ref{sis2}) along the IPLR iterations 
becomes ill-conditioned and the application of the preconditioner 
needs to be performed with high accuracy. 
We will refer to the resulting method as {\sc IPLR-GS\_P}.

As an alternative implementation to {\sc IPLR-GS,} we considered 
the use of a first-order approach to perform the minimization at Line 3 
of Algorithm \ref{GF_algo_upd}. We implemented the Barzilai-Borwein 
method \cite{BarzilaiBorwein88,raydan1997barzilai} with a non-monotone 
line-search following \cite[Algorithm 1]{Serafino2018} and using parameter 
values as suggested therein. The Barzilai-Borwein method iterates 
until $\|\nabla \phi_{\mu_k}(U_k,y_k) \|\le \min(10^{-3}, \mu_k)$ 
or a maximum of 300 iterations is reached. 
We refer to the resulting implementation as {\sc IPLR-BB}.

{\color{black}
The recent literature for the solution of matrix completion problems 
is very rich and there exist many algorithms finely tailored for such 
problems, see e.g. 
\cite{CaiCandesShen2010,chen2012matrix,optspace1,admira,lin2010augmented,fpca,toh2010accelerated,xu2012alternating} 
just to name a few. Among these, we chose the {\sc OptSpace} algorithm 
proposed in \cite{optspace1,optspace2} as a reference algorithm in the forthcoming tests. 
In fact, {\sc OptSpace} compares favourably \cite{optspace2} with the state-of-art solvers 
such as SVT \cite{CaiCandesShen2010}, ADMiRA \cite{admira} and FPCA \cite{fpca}
and its Matlab implementation is publicly available online 
\footnote{{\sc OptSpace}: \url{http://swoh.web.engr.illinois.edu/software/optspace/code.html}.}.
{\sc OptSpace} is a first-order algorithm. Assuming the known solution rank $r$, 
it first generates a good starting guess by computing the truncated SVD 
(of rank $r$) of a suitable sparsification of the available data $B_{\Omega}$ 
and then uses a gradient-type procedure in order to minimize the error 
$\|B-Q\Sigma V^T\|_F$ where $Q,\Sigma, V$ are the SVD factors of the current 
solution approximation. Since $Q$ and $V$ are orthonormal matrices,
the minimization in these variables is performed over the Cartesian product 
of Grassmann manifolds, while minimization in $\Sigma$ is computed exactly 
in $\mathbb{R}^{r\times r}$.
In \cite{optspace2}, {\sc OptSpace} has been equipped with two strategies 
to accommodate the unknown solution rank: the first strategy aims 
at finding a split in the eigenvalue distribution of the sparsified 
(``trimmed'') matrix and on accurate approximation of its singular values and 
the corresponding singular vectors; the second strategy starts from the singular 
vectors associated with the largest singular value and incrementally searches 
for the next singular vectors. The latter strategy yields the so called 
{\sc Incremental OptSpace} variant, proposed to handle ill-conditioned 
problems whenever an accurate approximation of the singular vector  
corresponding to the smallest singular value is not possible and 
the former strategy fails.

Matlab implementations of {\sc OptSpace} 
and {\sc Incremental OptSpace} have been employed in the next sections. We used 
default parameters  except for the maximum number of iterations. The default value is $50$ and, as reported in the next sessions,  it was  occasionally increased  to improve accuracy in the computed solution.  } 

We perform two sets of experiments: the first aims at validating 
the proposed algorithms and is carried out on randomly generated problems; 
the second is an application of the new algorithms to real data sets.

\subsection{Tests on random matrices}

{\color{black} 
As it is a common practice for a preliminary assessment of new methods, 
in this section we report on the performance of our proposed IPLR 
algorithm on matrices which have been randomly generated. 
We have generated random matrices both with noise and without noise, random 
nearly low-rank matrices and random mildly ill-conditioned matrices 
with and without noise. For the last class of matrices, which we expect
to mimic reasonably well the practical problems, we also report 
the solution statistics obtained with {\sc OptSpace}.}

We have generated $\hat n \times \hat n$ matrices of rank $r$ by sampling 
two $\hat n \times r$ factors $B_L$ and $B_R$ independently, each 
having independently and identically distributed Gaussian entries, 
and setting $B = B_L B_R$. The set of observed entries $\Omega$ 
is sampled uniformly at random among all sets of cardinality $m$.
The matrix  $B$ is declared recovered if the (2,1) block $\bar X$ 
extracted from the solution $X$ of (\ref{matcompl-sdp}), satisfies 
\begin{equation}\label{rec}
\| \bar X - B\|_F / \|B\|_F < 10^{-3},
\end{equation}
see \cite{CandesRecht2009}.

Given $r$, we chose $m$ by setting $m = c r(2\hat n-r)$,  $\hat n=600,700,800,900,1000$. We used 
$c=0.01 \hat n+4$.
These corresponding values of $m$ 
are much lower than the theoretical bound provided by \cite{CandesRecht2009}
and recalled in Section \ref{sec:mc}, but in our experiments they 
were sufficient to recover the sought matrix by IPLR.

In our experiments, the accuracy level in the matrix recovery 
in (\ref{rec}) is always achieved 
by setting $\epsilon = 10^{-4}$ in Algorithm \ref{GF_algo_upd}.

In the forthcoming tables we report: dimensions $n$ and $m$ 
of the resulting SDPs and target rank $r$ of the matrix to be 
recovered; being $X$ and $S$ the computed solution,
the final primal infeasibility  $\|{\cal A}(X)-b\|$, the complementarity 
gap $\|XS-\mu I\|_F$, the error in the solution of the matrix completion 
problem ${\cal E}= \| \bar X - B\|_F /\|B\|_F$, the overall cpu 
time in seconds.

In Tables \ref{tab:0-GS} and  \ref{tab:0-BB} we report statistics 
of {\sc IPLR-GS} and {\sc IPLR-BB}, respectively. We choose
as a starting rank $r$ the rank of the matrix $B$ to be recovered. 
In the last column of Table \ref{tab:0-GS} we report both the overall 
cpu time of {\sc IPLR-GS} without preconditioner (cpu) and with 
preconditioner (cpu\_P) in the solution of (\ref{sis2}). 
The lowest computational time for each problem is indicated in bold.
%
%
\begin{table}[htb!]
\begin{center}
\begin{tabular}{lccccc}
\toprule
                  & \multicolumn{ 5}{c}{\sc IPLR-GS }            \\
\midrule
      rank/$n$/$m$ & $\|{\cal A}(X)-b\|$ & $\|XS-\mu I\|_F$ & $\lambda_{\min}(S)$ &  $\cal E$ &  cpu/cpu\_P  \\
%
%
%
%
%

\midrule
3/1200/35910 &      4E-04 &      1E-03 &     4E-08 &            2E-06 &        229/{\bf 110} \\

4/1200/47840 &      2E-04 &      1E-03 &     4E-08 &           9E-07 &        173/{\bf 99} \\

5/1200/59750 &      4E-05 &      1E-03 &     4E-08 &           1E-07 &        156/{\bf 104} \\

6/1200/71640 &      2E-06 &      1E-03 &     4E-08 &           5E-09 &        219/{\bf 201} \\

7/1200/83510 &      5E-07 &      1E-03 &     4E-08 &           9E-10 &        {\bf 164}/199 \\

8/1200/95360 &      5E-08 &      1E-03 &     4E-08 &           8E-11 &        {\bf 152}/228 \\

\midrule
3/1400/46101 &      3E-04 &      1E-03 &     4E-08 &           1E-06 &        362/{\bf 148} \\

4/1400/61424 &      1E-04 &      1E-03 &     4E-08 &           8E-07 &        352/{\bf 175} \\

5/1400/76725 &      5E-05 &      1E-03 &     4E-08 &           1E-07 &        205/{\bf 151}\\

6/1400/92004 &      7E-06 &      1E-03 &     4E-08 &           1E-08 &        223/{\bf 199} \\

7/1400/107261 &      2E-07 &      1E-03 &     3E-08 &          4E-10 &        {\bf 214}/239 \\

8/1400/122496 &      2E-08 &      1E-03 &     3E-08 &           3E-11 &        {\bf 234}/329 \\

\midrule
3/1600/57492 &      3E-04 &      1E-03 &     3E-08 &          1E-06 &        330/{\bf 168} \\

4/1600/76608 &      1E-04 &      1E-03 &     3E-08 &           4E-07 &        387/{\bf 174} \\

5/1600/95700 &      4E-05 &      1E-03 &     3E-08 &           9E-08 &        433/{\bf 235} \\

6/1600/114768 &      1E-06 &      1E-03 &     3E-08 &          2E-09 &        316/{\bf 226} \\

7/1600/133812 &      2E-07 &      1E-03 &     3E-08 &          2E-10 &        393/{\bf 331} \\

8/1600/152832 &      4E-08 &      1E-03 &     3E-08 &          5E-11 &        {\bf 334}/370 \\
\midrule
3/1800/64692 &      4E-04 &      1E-03 &     3E-08 &          2E-06 &        566/{\bf 259} \\

4/1800/86208 &     3E-04 &      1E-03 &     3E-08 &           7E-07 &        506/{\bf 231} \\

5/1800/107700 &      4E-05 &      1E-03 &     3E-08 &          1E-07 &        465/{\bf 270} \\

6/1800/129168 &      1E-05 &      1E-03 &     3E-08 &          6E-08 &        586/{\bf 364} \\

7/1800/150612 &      8E-07 &      1E-03 &     3E-08 &          3E-9&        606/{\bf 462} \\

8/1800/172032 &      4E-07 &      1E-03 &     3E-08 &          1E-9 &        831/{\bf 795} \\

\midrule
3/2000/83874 &      3E-04 &      1E-03 &     2E-08 &          1E-06 &        599/{\bf 400} \\

4/2000/111776 &     3E-04 &      1E-03 &     2E-08 &           7E-07 &        544/{\bf 365} \\

5/2000/139650 &      1E-05 &      1E-03 &     2E-08 &          3E-08 &        783/{\bf 512} \\

6/2000/167496 &      2E-06 &      1E-03 &     2E-08 &          3E-09 &        601/{\bf 485} \\

7/2000/195314 &      2E-07 &      1E-03 &     2E-08 &          2E-10 &        657/{\bf 594} \\

8/2000/223104 &      2E-08 &      1E-03 &     2E-08 &          4E-11 &        {\bf 627}/669\\

\bottomrule

\end{tabular}  
\caption{{\sc IPLR-GS} on random matrices.\label{tab:0-GS}}
\end{center}
\end{table}

\begin{table}[htb!]
\begin{center}
\begin{tabular}{lccccc}
\toprule
                       & \multicolumn{ 5}{c}{\sc IPLR-BB }                     \\
\midrule
          rank/$n$/$m$ & $\|{\cal A}(X)-b\|$ & $\|XS-\mu I\|_F$ & $\lambda_{\min}(S)$ &  ${\cal E}$ &       cpu  \\
\midrule
3/1200/35910 &      4E-06 &      1E-03 &     4E-08 &          2E-08 &        223 \\

4/1200/47840 &      1E-05 &      1E-03 &     4E-08 &          3E-08 &        186\\

5/1200/59750 &      6E-06 &      1E-03 &     4E-08 &          2E-08 &        235 \\

6/1200/71640 &      8E-06 &      1E-03 &     4E-08 &          1E-08 &        242 \\

7/1200/83510 &      4E-06 &      1E-03 &     4E-08 &          9E-09 &        237 \\

8/1200/95360 &      6E-06 &      1E-03 &     4E-08 &          1E-08 &        223 \\

\midrule
3/1400/46101 &      8E-06 &      1E-03 &     4E-08 &          3E-08 &        402 \\

4/1400/61424 &      2E-06 &      1E-03 &     4E-08 &           8E-08 &       402 \\

5/1400/76725 &      6E-06 &      1E-03 &     4E-08 &           1E-08 &       332 \\

6/1400/92004 &      4E-06 &      1E-03 &     3E-08 &           9E-09 &       403 \\

7/1400/107261 &     2E-06 &      1E-03 &     3E-08 &          4E-09 &        361 \\

8/1400/122496 &     2E-06 &      1E-03 &     3E-08 &          6E-09 &        386 \\

\midrule
3/1600/57492 &      2E-04 &      1E-03 &     3E-08 &          6E-09 &        557 \\

4/1600/76608 &      4E-06 &      1E-03 &     3E-08 &           8E-09 &        620\\

5/1600/95700 &      2E-06 &      1E-03 &     3E-08 &           5E-08 &        506 \\

6/1600/114768 &      2E-06 &      1E-03 &     3E-08 &          3E-09 &        477 \\

7/1600/133812 &      4E-06 &      1E-03 &     3E-08 &          5E-09 &        571 \\

8/1600/152832 &      4E-07 &      1E-03 &     3E-08 &          5E-10 &        600 \\
\bottomrule
3/1800/64692 &      9E-06 &      1E-03 &     3E-08 &          6E-08 &        573 \\

4/1800/86208 &     8E-06 &      1E-03 &     3E-08 &           4E-08 &        906 \\

5/1800/107700 &      4E-06 &      1E-03 &     3E-08 &          1E-08 &        784 \\

6/1800/129168 &      2E-06 &      1E-03 &     3E-08 &          6E-09 &        686 \\

7/1800/150612 &      3E-06 &      1E-03 &     3E-08 &          1E-8&        625 \\

8/1800/172032 &      4E-07 &      1E-03 &     3E-08 &          1E-8 &        862 \\

\midrule
3/2000/83874 &      7E-06 &      1E-03 &     3E-08 &          3E-08 &        900 \\

4/2000/111776 &     4E-07 &      1E-03 &     3E-08 &           9E-10 &        1000 \\

5/2000/139650 &      4E-06 &      1E-03 &     3E-08 &          1E-08 &        921 \\

6/2000/167496 &      7E-06 &      1E-03 &     2E-08 &          1E-08 &         900 \\
7/2000/195314 &      3E-07 &      1E-03 &     2E-08 &          3E-10 &       1000 \\
8/2000/223104 &      4E-08 &      1E-03 &     2E-08 &          3E-09 &       931 \\

\bottomrule
\end{tabular}  
\caption{{\sc IPLR-BB} on random matrices.\label{tab:0-BB}}
\end{center}
\end{table}

%
%
As a first comment, we verified that Assumption 1 in Section \ref{sectDBA} 
holds in our experiments. In fact, the method manages to preserve 
positive definiteness of the dual variable and $\alpha_k<1$ 
is taken only in the early stage of the iterative process.

Secondly, we observe that both {\sc IPLR-GS} and {\sc IPLR-BB} provide 
an approximation to the solution of the sought rank; in some runs 
the updating procedure increases the rank, but at the subsequent 
iteration the downdating strategy is activated and the procedure 
comes back to the starting rank $r$.
Moreover, {\sc IPLR-GS} is overall less expensive than {\sc IPLR-BB} 
in terms of cpu time, in particular as $n$ and $m$ increase. In fact, 
the cost of the linear algebra in the {\sc IPLR-GS} framework is 
contained as one/two inner Gauss-Seidel iterations are performed 
at each outer {\sc IPLR-GS} iteration except for the very few initial 
ones where up to five inner Gauss-Seidel iterations are needed.
To give more details of the computational cost of both methods, 
in Table \ref{GS-BB-900} we report some statistics of {\sc IPLR-GS} 
and {\sc IPLR-BB} for $ \hat n=900$, $r=3$ and $8$. 
More precisely we report the average number of inner Gauss-Seidel 
iterations (avr\_GS) and the average number of unpreconditioned CG 
iterations in the solution of \eqref{sis1} (avr\_CG\_1) 
and \eqref{sis2} (avr\_CG\_2) for {\sc IPLR-GS} and the average 
number of BB iterations for {\sc IPLR-BB} (avr\_BB). We notice that 
the solution of SDP problems becomes more demanding as the rank 
increases, but both the number of BB iterations and the number 
of CG iterations are reasonable. 
\begin{table}[h]
\begin{center}
\begin{tabular}{lccc|c}
\toprule
                  & \multicolumn{ 3}{c}{\sc IPLR-GS }   &       {\sc IPLR-BB}       \\
\midrule 
      rank/$n$/$m$ & avr\_GS &avr\_CG\_1  & avr\_CG\_2  &   avr\_BB   \\
      \midrule
3/1800/64692       &   2.1 &   15.3  & 24.2    & 68\\
 8/1800/172032    &  2.0 &  19.5 & 42.2  & 88 \\ 
 \bottomrule

\end{tabular}  
\caption{Statistics of {\sc IPLR-GS} and {\sc IPLR-BB} on random 
         matrix $\hat n=900$, $r=3$ and $8$.\label{GS-BB-900}}
\end{center}
\end{table}

To provide an insight into the linear algebra phase, in Figure \ref{fig:eig} 
we plot the minimum nonzero eigenvalue and the maximum eigenvalue 
of the coefficient matrix of (\ref{sis1}), 
i.e. $Q_k^T (A^TA + (S_k^2 \otimes I_n)) Q_k$. 
We remark that the matrix depends both on the outer iteration $k$ and 
on the inner Gauss-Seidel  iteration $\ell$ and we dropped the index 
$\ell$ to simplify the notation. Eigenvalues are plotted against 
the inner/outer iterations, for $ \hat n=100$, $r=4$ and {\sc IPLR-GS} 
continues until $\mu_k<10^{-7}$. In this run only one inner 
iteration is performed at each outer iteration except for the first 
outer iteration.
We also plot in the left picture of Figure \ref{fig:cg} the number 
of CG iterations versus inner/outer  iterations.
The figures show that the condition number of $Q_k$ and the overall 
behaviour of CG do not depend on $\mu_k$.  Moreover, Table \ref{GS-BB-900} 
shows that unpreconditioned CG is able to reduce the relative residual 
below $10^{-6}$ in a low number of iterations even in the solution 
of larger problems and higher rank. These considerations motivate our 
choice of solving (\ref{sis1}) without employing any preconditioner. 
%
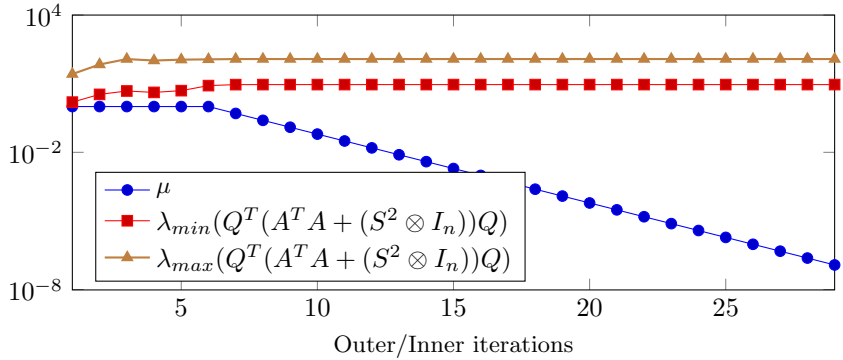
\begin{figure} \centering
		\begin{tikzpicture}
		\begin{semilogyaxis}[width=. 9* \textwidth, 
		   legend pos=south west,
		   legend cell align=left,
		   xlabel = {\small Outer/Inner iterations}, 
		   height = .25 \textheight,
		   xmax = 29,
		   xmin = 1,
		   ymax = 10000
		 ]
		  \addplot+ table[x index = 0, y index = 1] {./eig_deltau.dat};
		  \addplot+ table[x index = 0, y index = 2] {./eig_deltau.dat};
		  \addplot+[mark=triangle*,brown,thick,mark options={ fill=brown}] table[x index = 0, y index = 3] {./eig_deltau.dat};		  
	 \legend{\small $\mu$,$\lambda_{min}(Q^T (A^TA + (S^2 \otimes I_n)) Q)$,$\lambda_{max}(Q^T (A^TA + (S^2 \otimes I_n)) Q)$};
		\end{semilogyaxis}
	\end{tikzpicture}		
  \caption{The minimum nonzero eigenvalue and the maximum eigenvalue 
  of the coefficient matrix of (\ref{sis1}) and $\mu_k$ (semilog scale) 
  versus Outer/Inner {\sc IPLR-GS} iterations. Data: $\hat n=100$, $r=4$.}
\label{fig:eig}
	\end{figure}

We now discuss the effectiveness of the preconditioner $P_k$ given 
in \eqref{prec}, with $Z_k$ given in \eqref{z2}, in the solution 
of \eqref{sis2}. Considering $ \hat n=100$, $r=4$, in Figure \ref{fig:eigsis2} 
we plot the eigenvalue distribution (in percentage) 
of $A (I \otimes X_k^2) A^T$ and $P_k^{-1}(A (I \otimes X_k^2) A^T)$ 
at the first inner iteration of outer {\sc IPLR-GS} iteration 
corresponding to $\mu_k \approx 1.9e\!-\!3$. We again drop the index $\ell$.
We can observe that the condition number of the preconditioned 
matrix is about $1.3e5$, and it is significantly smaller than the condition 
number of the original matrix (about $3.3e10$). The preconditioner succeeded 
both in pushing the smallest eigenvalue away from zero and in reducing 
the largest eigenvalue.  
However, CG converges in a reasonable number of iterations even 
in the unpreconditioned case, despite the large condition number. 
In particular, we can observe in the right picture of Figure \ref{fig:cg} 
that preconditioned CG takes less than five iterations in the last stages 
of {\sc IPLR-GS} and that the most effort is made in the initial
stage of the {\sc IPLR-GS} method; in this phase the preconditioner 
is really effective in reducing the number of CG iterations.
These considerations remain true even for larger values of $\hat n$ 
and $r$ as it is shown in Table \ref{GS-BB-900}.


%

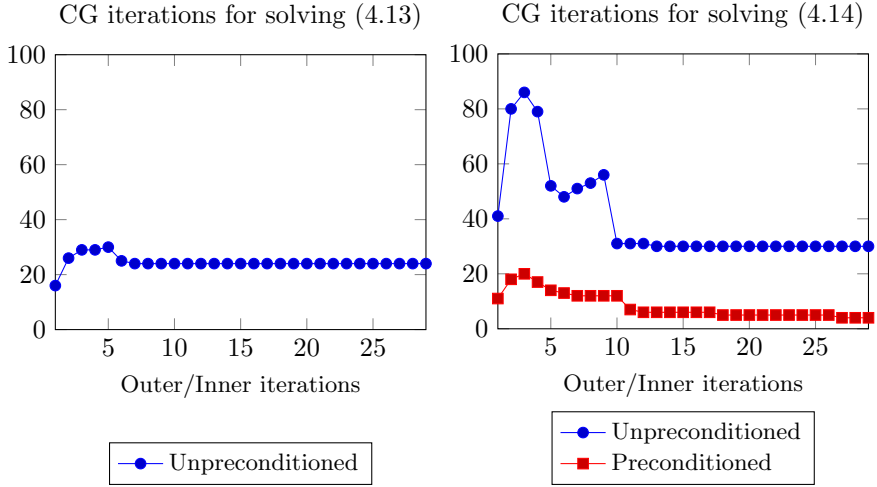
\begin{figure} \centering
\begin{tikzpicture}
		\begin{axis}[width=.5 * \textwidth, 
		   title = {CG iterations for solving (\ref{sis1})}, 
		   legend style={at={(0.5,-0.41)},anchor=north},
		   legend cell align=left,
		   xlabel = {\small Outer/Inner iterations}, 
		   height = .25 \textheight,
			 xmax = 29,
		   xmin = 1,
		   ymax = 100,
		   ymin = 0
		 ]		 
		 \addplot+ table[x index = 0, y index = 1] {./unp_deltau.dat};
	\legend{ \small Unpreconditioned };
		\end{axis}
	\end{tikzpicture}~\begin{tikzpicture}
		\begin{axis}[width=. 5* \textwidth, 
		   title = {CG iterations for solving (\ref{sis2})}, 
		   legend style={at={(0.5,-0.3)},anchor=north},
		   legend cell align=left,
		   xlabel = {\small Outer/Inner iterations}, 
		   height = .25 \textheight,
			 xmax = 29,
		   xmin = 1,
		   ymax = 100,
		   ymin = 0
		 ]
		\foreach \j in {1,2} {
		  \addplot+ table[x index = 0, y index = \j] {./unp_p_deltay.dat};
		}
		\legend{ \small Unpreconditioned, \small Preconditioned  };
		\end{axis}
	\end{tikzpicture}
	\caption{CG iterations for solving systems (\ref{sis1}) with {\sc IPLR-GS} (left) and  
	CG iterations for solving systems (\ref{sis2}) with {\sc IPLR-GS} and {\sc IPLR-GS\_P}   (right). 
	Data: $\hat n=100$, $r=4$.} \label{fig:cg}
	\end{figure}

Focusing on the computational cost of the preconditioner's application, 
we can observe from the cpu times reported in Table \ref{tab:0-GS}, 
that for $r=3,4,5$ the employment of the preconditioner produces 
a great benefit, with savings that vary from $20\%$ to $50\%$.  
Then, the overhead associated to the construction and application 
of the preconditioner is more than compensated by the gains in the number 
of CG iterations. The cost of application of the preconditioner increases 
with $r$ as the dimension of the diagonal blocks of $M_k$ in \eqref{prec-prec} 
increases with $r$. Then, for small value of $\hat n$ and $r=6,7,8$ 
unpreconditioned CG is preferable, while for larger value of $\hat n$ 
the preconditioner is effective in reducing the overall computational 
time for $r\le 7$. This behaviour is summarized in Figure \ref{fig:rankn} 
where we plot the ratio cpu\_P/cpu with respect to dimension $n$ and rank 
(from 3 to 8).


\begin{figure}
\centering
\begin{tikzpicture}
\begin{axis}[width=. 5* \textwidth, 
		   ybar interval,xtick=data,
	xticklabel interval boundaries,%
	ylabel={Percentage},
	xlabel={$\log_{10}(\lambda(A (I \otimes X_k^2) A^T))$},
	x tick label style=
		{rotate=45,anchor=east}]
\addplot+[ybar interval] plot coordinates
                  {(-6,80) (-5,0)  (2,5) (3, 15) (4,0)};
\end{axis}
 \end{tikzpicture}~
\begin{tikzpicture}
 \begin{axis}
[width=. 5* \textwidth, 
		   ybar interval,xtick=data,
	xticklabel interval boundaries,
	ylabel={Percentage},
	xlabel={$\log_{10}(\lambda(P_k^{-1}(A (I \otimes X_k^2) A^T)))$},
	x tick label style=
		{rotate=45,anchor=east},
		]
\addplot+[ybar interval] plot coordinates
		{(-3,80) (-2.5,0)  (2,20) (2.5, 0)};
\end{axis}
\end{tikzpicture}

\caption{Eigenvalue distribution of $A (I \otimes X_k^2) A^T$ (left) 
and $P_k^{-1}(A (I \otimes X_k^2) A^T)$ (right)
at the first inner iteration of outer {\sc IPLR-GS} iteration 
corresponding to $\mu_k\approx 1.9 e-3$
(semilog scale). Data: $\hat n=100$, $r=4$.}\label{fig:eigsis2}
\end{figure}
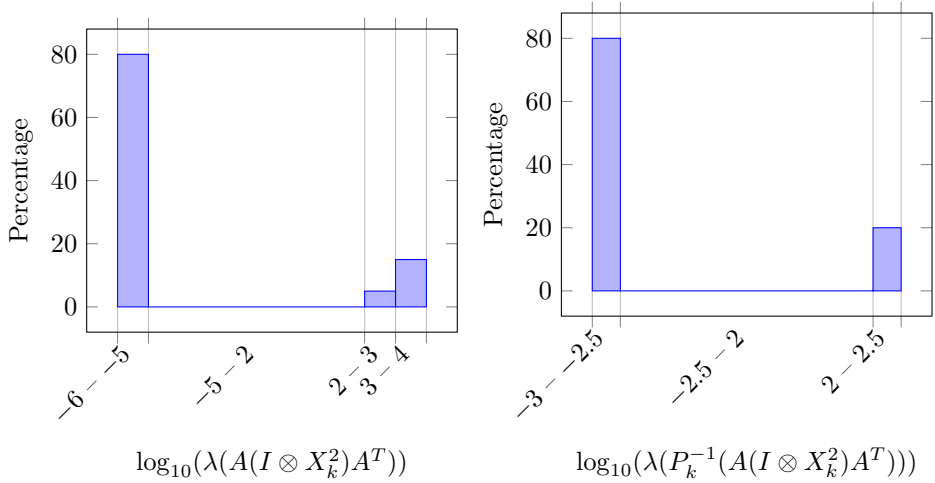
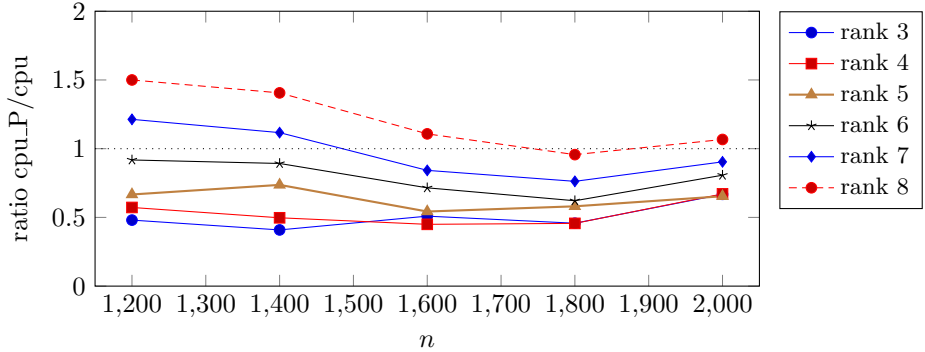
\begin{figure} \centering
		\begin{tikzpicture}
		\begin{axis}[width=. 8* \textwidth, 
		   legend pos=outer north east,
		   legend cell align=left,
		   xlabel = {$n$}, 
		   ylabel = {ratio cpu\_P/cpu},
		   height = .25 \textheight,
		   xmax = 2050,
		   xmin = 1150,
		   ymax = 2,
		   ymin = 0
		 ]
		  \addplot table {./rank_n_prec3.dat};
		  \addplot table {./rank_n_prec4.dat};
		  \addplot[mark=triangle*,brown,thick,mark options={ fill=brown}] table {./rank_n_prec5.dat};
		  \addplot table {./rank_n_prec6.dat};
		  \addplot table {./rank_n_prec7.dat};
		  \addplot table {./rank_n_prec8.dat};
\legend{\small rank $3$, \small  rank $4$,\small rank $5$, \small  rank $6$,\small rank $7$, \small  rank $8$ };
 	         \addplot[dotted]  plot coordinates
 		{(1150,1) (20150,1)};
		\end{axis}
	\end{tikzpicture}		
  \caption{The ratio cpu\_P/cpu as a funcion of dimension $n$ and of the rank
  (data extracted from Table \ref{tab:0-GS}).}
  \label{fig:rankn}
\end{figure}

In the approach proposed in this paper the primal feasibility 
is gradually reached, hence it is also possible to handle data $B_{\Omega}$ 
corrupted by noise. To test how the method behaves in such situations 
we set $\hat B_{(s,t)} = B_{(s,t)} + \eta RD_{(s,t)}$ for any $(s,t) \in \Omega$, 
where $RD_{(s,t)}$ is a random scalar drawn from the standard normal distribution, 
generated by the Matlab function {\tt randn}; $\eta>0$ is the level of noise. 
Then, we solved problem \eqref{matcompl-sdp} using the corrupted 
data $\hat B_{(s,t)}$ to form the vector $b$.
Note that, in this  case $\|{\cal A}(B)-b\|_2\approx \eta \sqrt{m}$. 
In order to take into account the presence of noise we set 
$\epsilon = \max(10^{-4},10^{-1} \eta)$ in Algorithm \ref{GF_algo_upd}. 

Results of these runs are collected in Table \ref{tab::3} where 
we considered $\eta = 10^{-1}$ and started with the target rank $r$. 
In table \ref{tab::3} we also report 
{\color{black} $$RMSE= \| \bar X - B\|_F /\hat n,$$} 
that is the root-mean squared error per entry. 
Note that the root-mean error per entry 
in data $B_{\Omega}$ is of the order of the noise level $10^{-1}$, 
as well as $\|{\cal A}(B)-b\|_2/\sqrt{m}$. Then, we claim to recover 
the matrix with acceptable accuracy, corresponding to an average error 
smaller than the level of noise.

\begin{table}[htb!]
\begin{center}
\begin{tabular}{lccccc}
\toprule
                    & \multicolumn{ 5}{c}{{\sc IPLR-GS\_P}}    \\
\midrule
  rank/$n$/$m$      & $\|{\cal A}(X)-b\|$ & $\|XS-\mu I\|_F$ & $\lambda_{\min}(S)$ &  $ \| \bar X - B\|_F /\hat n$ &       cpu  \\

\midrule

4/1200/47840  &     2E01  &      1E-01   &   6E-06 &      3E-02 &     67 \\

6/1200/71640  &     2E01  &      1E-01   &   6E-06 &      3E-02 &    128 \\

8/1200/95360  &     3E01  &      1E-01   &   5E-06 &      3E-02 &    182 \\

\midrule

4/1600/76608  &     3E01  &      2E-01   &   4E-06 &      3E-02 &    178 \\

6/1600/114768 &     3E01  &      2E-01   &   4E-06 &      3E-02 &    224 \\

8/1600/152832 &     4E01  &      2E-01   &   4E-06 &      3E-02 &    358 \\
\midrule

4/2000/111776 &     3E01  &      2E-01   &   4E-06 &      3E-02 &    259 \\

6/2000/167496 &     4E01  &      2E-01   &   4E-06 &      3E-02 &    373 \\

8/2000/223104 &     4E01  &      2E-01   &   4E-06 &      3E-02 &    543 \\

\bottomrule
\end{tabular}  
\caption{{\sc IPLR-GS\_P}  on noisy matrices  (noise level $\eta = 10^{-1}$). \label{tab::3}}
\end{center}
\end{table}

{\color{black} 
\subsection*{\bf Mildly ill-conditioned problems}
In this subsection we compare the performance of {\sc IPLR\_GS\_P}, {\sc OptSpace} 
and {\sc Incremental OptSpace} on mildly ill-conditioned problems with exact 
and noisy observations.
We first consider exact observation and vary the condition number of the matrix 
that has to be recovered $\kappa$.
We fixed $\hat n=600$ and $r=6$ and, following \cite{optspace2}, generated 
random matrices with a prescribed condition number $\kappa$ and rank $r$ as follows.
Given a random matrix $B$ generated as in the previous subsection, 
let $Q\Sigma V^T$ be its SVD decomposition and $\tilde Q$ and $\tilde V$ 
be the matrices formed by the first $r$ columns of $Q$ and $V$, respectively. 
Then,  we formed  the matrix $\hat B$ that has to be recovered 
as $\hat B=\tilde Q \tilde \Sigma \tilde V^T$, where $\tilde \Sigma$ 
is a $r\times r$ diagonal matrix with diagonal entries equally spaced 
between $\hat n$ and $\hat n/\kappa$.
In Figure \ref{ill-conditioned} we plot the RMSE value against the condition 
number for all the three solvers considered, using the $13\%$ of the observations.
We can observe, as noticed in \cite{optspace2}, that {\sc OptSpace} does not 
manage to recover mildly ill-conditioned matrices while  {\sc Incremental OptSpace} 
improves significantly over {\sc OptSpace}. According to \cite{optspace2}, 
the convergence difficulties of {\sc OptSpace} on these tests has to be 
ascribed to the singular value decomposition of the trimmed matrix needed 
in Step 3 of {\sc OptSpace}. In fact, the singular vector corresponding 
to the smallest singular value cannot be approximated with enough accuracy. 
On the other hand, our approach is more accurate than {\sc Incremental OptSpace} 
and its behaviour only slightly deteriorates as $\kappa$ increases.


\begin{figure} \centering
		\begin{tikzpicture}
		\begin{semilogyaxis}[width= 0.8\textwidth, 
		   legend pos=north east,
		   legend cell align=left,
		   xlabel = {$\kappa$}, 
		   ylabel = {RMSE},
		   height = .25 \textheight,
		   xmax = 100,
		   xmin = 10,
		 ]
		  \addplot table {./ill_conditioned_1.dat};
		  \addplot table {./ill_conditioned_2.dat};
		  \addplot[mark=triangle*,thick, color=brown] table{./ill_conditioned_3.dat};
	\legend{\footnotesize {\sc IPLR-GS\_P}, \footnotesize {\sc OptSpace}, 
                \footnotesize {\sc Incremental OptSpace}};
                \end{semilogyaxis}	
	\end{tikzpicture}			
\caption{{\sc IPLR-GS\_P}, {\sc OptSpace} and {\sc Incremental OptSpace} 
on mildly ill-conditioned matrices (semilog scale $\hat n=600$, $r=6$, $n=1200$, $m=47840$).} 
    \label{ill-conditioned}
    \end{figure}
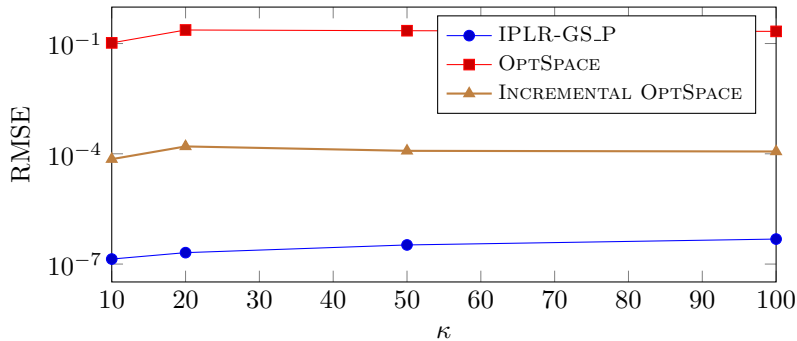

Now, let us focus on the case of noisy observations. We first fixed $\kappa=200$ 
and varied the noise level.  In Figure \ref{noise-ill-conditioned} we plot 
the RMSE value against the noise level for all the three solvers considered, 
using the $20\%$ of observations. Also in this case {\sc IPLR-GS\_P} 
is able to recover the matrix $\hat B$ with acceptable accuracy, corresponding 
to an average error smaller than the level of noise, and outperforms 
both {\sc OptSpace} variants when the noise level is below $0.8$. 
In fact, {\sc OptSpace} managed to recover $\hat B$ only with a corresponding 
$RMSE$ of the order of $10^{-1}$ for any tested noise level, consistent only 
with the larger noise level tested. 


\begin{figure} \centering
		\begin{tikzpicture}
		\begin{semilogyaxis}[width=0.8 \textwidth, 
		   legend pos=south east,
		   legend cell align=left,
		   xlabel = {$\eta$}, 
		   ylabel = {RMSE},
		   height = .25 \textheight,
		   xmax = 1.1,
		   xmin = 0,
		 ]
		  \addplot table {./noise_ill_conditioned_1.dat};
		  \addplot table {./noise_ill_conditioned_2.dat};
		  \addplot[mark=triangle*,thick, color=brown] table {./noise_ill_conditioned_3.dat};
	\legend{\footnotesize  {\sc  IPLR-GS\_P},\footnotesize {\sc OptSpace},\footnotesize {\sc Incremental OptSpace}};\end{semilogyaxis}	
	\end{tikzpicture}			
\caption{{\sc IPLR-GS\_P},   {\sc OptSpace}  and   {\sc Incremental OptSpace} on noisy and mildly  ill-conditioned matrices  (semilog scale, $\kappa=200$, $\hat n=600$, $r=6$, $n=1200$, $m=71640$).} \label{noise-ill-conditioned}
	\end{figure}
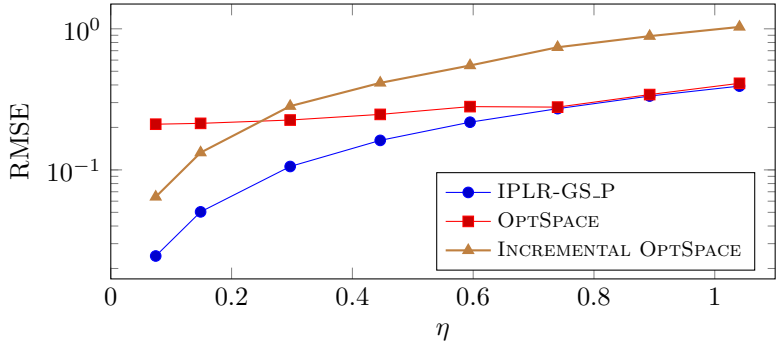


In order to get a better insight into the behaviour of the method on mildly 
ill-conditioned and noisy problems, we fixed $\kappa=100$, noise level 
$\eta=0.3$ and varied the percentage of known entries from $8.3$\% to $50$\%, 
namely we set $m=30000, 45000,$ $60000, 120000, 180000$. 
In Figure \ref{noise-ill-conditioned-density} the value of $RMSE$ 
is plotted against the percentage of known entries. The oracle error 
value $RMSE_{or}=\eta \sqrt{(n1r-r^2)/m}$, given in \cite{candes2010noise} 
is plotted, too. We observe that in our experiments {\sc IPLR-GS\_P} recovers 
the sought matrix with RMSE values always smaller than $1.3RMSE_{or}$, 
despite the condition number of the matrix. This is not the case for {\sc OptSpace} 
and {\sc Incremental OptSpace};  {\sc OptSpace}   can reach a comparable accuracy only 
if the percentage of known entries exceeds $30\%$. 
As expected, for all methods the error decreases as the number of subsampled 
entries increases.


\begin{figure} \centering
		\begin{tikzpicture}
		\begin{axis}[width= 0.8\textwidth, 
		   legend pos=north east,
		   legend cell align=left,
		   xlabel = {percentage of observations}, 
		   ylabel = {RMSE},
		   height = .25 \textheight,
		   xmax = 70,
		   xmin = 0,
		 ]
		  \addplot table {./noise_ill_conditioned_density_1.dat};
		  \addplot table {./noise_ill_conditioned_density_2.dat};
		  \addplot[mark=triangle*,thick, color=brown] table {./noise_ill_conditioned_density_3.dat};
		  \addplot table {./noise_ill_conditioned_density_4.dat};
	\legend{\footnotesize {\sc IPLR-GS\_P}, \footnotesize {\sc OptSpace}, 
                \footnotesize {\sc Incremental OptSpace}, \footnotesize {\sc ORACLE}};
        \end{axis}
	\end{tikzpicture}			
\caption{{\sc IPLR-GS\_P}, {\sc OptSpace} and {\sc Incremental OptSpace} 
on noisy and mildly ill-conditioned matrices, varying the percentage 
of observations ($\kappa=100$, $\hat n=600$, $r=6$, $n=1200$, $\eta=0.3$).} 
        \label{noise-ill-conditioned-density}
	\end{figure}
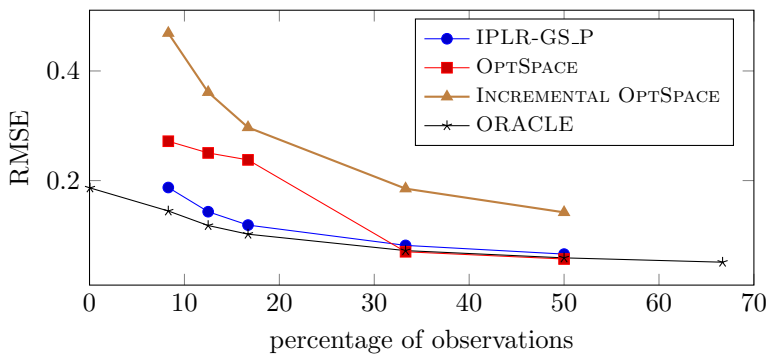

}

{\color{black} 
In summary, for mildly ill-conditioned random matrices our approach 
is more reliable than {\sc OptSpace} and {\sc Incremental OptSpace} 
as the latter algorithms 
might struggle with computing the singular vectors of the sparsified
data matrix accurately, and they cannot deliver precision comparable 
to that of {\sc IPLR}. For the sake of completeness, we remark that 
we have tested {\sc OptSpace} also on the well-conditioned random 
matrices reported in Tables \ref{tab:0-GS}-\ref{tab:0-BB} and \ref{tab::3}. 
On these problems {\sc IPLR} and {\sc OptSpace} provide comparable 
solutions, but as a solver specially designed for matrix-completion 
problems {\sc OptSpace} is generally faster than IPLR. 
}

\noindent
\subsection*{\bf Rank updating}

We now test the effectiveness of the rank updating/downdating strategy 
described in Algorithm \ref{GF_algo_upd}. To this purpose, we run 
{\sc IPLR-GS\_P} starting from $r=1$, with rank increment/decrement 
$\delta_r = 1$ and report the results in Table \ref{tab:1} 
for $\hat n=600,800,1000$. In all runs, the target rank has been 
correctly identified by the updating strategy and the matrix $B$ 
is well-recovered. Runs in italic have been obtained allowing 10 
inner Gauss-Seidel iterations.  In fact, 5 inner Gauss-Seidel 
iterations were not enough to sufficiently  
reduce the residual in \eqref{least-square} and the procedure did not 
terminate with the correct rank. Comparing the values of the cpu time 
in Tables \ref{tab:0-GS} and \ref{tab:1} we observe that the use of rank 
updating strategy increases the overall time; on the other hand, it allows 
to adaptively modify the rank in case a solution of \eqref{matcompl-sdp} 
with the currently attempted rank does not exist.

\begin{table}[htb!]
\begin{center}
\begin{tabular}{lccccc}
\toprule
    &             \multicolumn{ 5}{c}{\sc IPLR-GS\_P }                     \\
\midrule
          rank/$n$/$m$      & $\|{\cal A}(X)-b\|$ & $\|XS-\mu I\|_F$ & $\lambda_{\min}(S)$ &  ${\cal E}$ &       cpu  \\

\midrule
3/1200/35910 &      4E-04 &      1E-03 &     4E-08 &          3E-06 &        161\\

4/1200/47840 &      4E-04 &      1E-03 &     5E-08 &          3E-06 &        206 \\

5/1200/59750 &      5E-05 &      1E-03 &     5E-08 &          3E-07 &        315 \\

6/1200/71640 &      9E-06 &      1E-03 &     4E-08 &          5E-08 &        390 \\

7/1200/83510 &      8E-06 &      1E-03 &     4E-08 &          4E-08 &        494 \\

8/1200/95360 &      {\it 4E-07} &    {\it  1E-03} &    {\it 4E-08} &         {\it 2E-09} &       {\it 746}\\ 
\midrule
3/1600/57492 &      4E-04 &      1E-03 &     3E-08 &          3E-06 &        411 \\

4/1600/76608 &      3E-04 &      1E-03 &     4E-08 &           1E-06 &        488 \\

5/1600/95700 &      7E-05 &      1E-03 &     3E-08 &           3E-07 &        641 \\

6/1600/114768 &     2E-05 &      1E-03 &     3E-08 &          8E-08 &        841 \\

7/1600/133812 &   {\it  4E-07} &      {\it 1E-03} &     {\it 3E-08} &      {\it    1E-09}& {\it       996}\\

8/1600/152832 &      {\it 1E-07} &      {\it 1E-03} &     {\it 3E-08} &          {\it 4E-10} &   1238     \\ 
\midrule
3/2000/83874 &      3E-04 &      1E-03 &     3E-08 &          2E-06 &         566\\ 

4/2000/111776 &     3E-04 &      1E-03 &     3E-08 &           1E-06 &        791\\

5/2000/139650 &      3E-05 &      1E-03 &     3E-08 &          1E-07 &        894\\ 
6/2000/167496 &      9E-06 &      1E-03 &     3E-08 &          1E-08&       1293 \\

7/2000/195314 &      3E-07 &      1E-03 &     3E-08 &          1E-7 &       1809 \\

8/2000/223104 &     {\it  1E-07} &     {\it  1E-03} &   {\it  3E-08} &   {\it       3E-10} &   {\it     2149}\\
\bottomrule
\end{tabular} 
 
\caption{{\sc IPLR-GS\_P} on random matrices starting with $r=1$. \label{tab:1}}
\end{center}
\end{table}

The typical updating behaviour is illustrated in Figure \ref{fig:rankupd}
where we started with rank 1 and reached the target rank 5.
In the first eight iterations a solution of the current rank does not 
exist and therefore the procedure does not manage to reduce the primal 
infeasibility as expected. Then, the rank is increased.
At iteration 9 the correct rank has been detected and the primal infeasibility 
drops down. Interestingly, the method attempted rank 6 at iteration 13, but 
quickly corrected itself and returned to rank 5 which was the right one.

   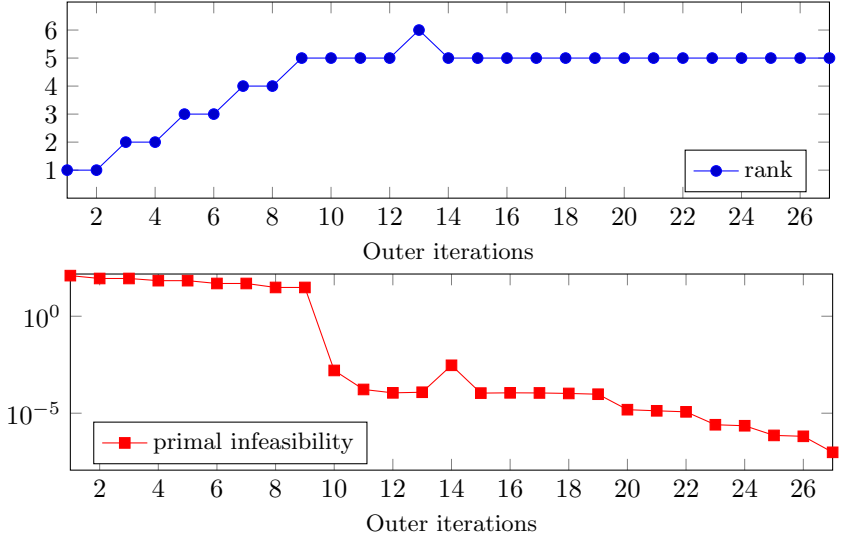
\begin{figure} \centering
\hspace*{10pt}
			\begin{tikzpicture}
		\begin{axis}[width=. 9* \textwidth, 
		   legend cell align=left,
		   legend pos=south east,
		   xlabel = {\small Outer iterations}, 
		   height = .2 \textheight,
		   ytick = {1,2,...,6},
		   xmax = 27,
		   xmin = 1,
		   ymax = 7,
		   ymin = 0
		 ]
		\foreach \j in {1} {
		  \addplot+ table[x index = 0, y index = \j] {./rank_upd.dat};
		}
		\legend{\small rank  };
		\end{axis}
	\end{tikzpicture}\\
	\begin{tikzpicture}
		\begin{semilogyaxis}[width=. 9* \textwidth, 
		   legend pos=south west,
		   xlabel = {\small Outer iterations}, 
		   height = .2 \textheight,
			 xmax = 27,
		   xmin = 1,
		   ymax = 150
		 ]
		\foreach \j in {1} {
		  \addplot+[mark options={color=red},color=red, mark=square*] table[x index = 0, y index = \j] {./primal.dat};
		}
		\legend{ \small primal infeasibility};
		\end{semilogyaxis}
	\end{tikzpicture}
  \caption{Typical behaviour of the rank update strategy described in Algorithm \ref{GF_algo_upd}.
  Data: $ \hat n=50$, target rank $r=5$, starting rank $r=1$.}
\label{fig:rankupd}
	\end{figure}
   
\vskip 10 pt 
\noindent
The proposed approach handles well the situation where the matrix 
which has to be rebuilt is nearly low-rank. We recall that by Corollary \ref{cor} we generate a low-rank 
approximation $\bar X_k$, while the primal variable $X_k$ is nearly 
low-rank and gradually approaches a low-rank solution. 
Then, at termination, we approximate the nearly low-rank matrix 
that has to be recovered with the low-rank solution approximation.   

Letting $\sigma_1\ge \sigma_2\ge \dots \ge \sigma_{\hat n}$ be 
the singular values of $B$, we perturbed each singular value of $B$
by a random scalar  $\xi = 10^{-3}\eta$, where $\eta$ is drawn 
from the standard normal distribution, and using the SVD decomposition 
of $B$ we obtain a nearly low-rank matrix $\hat B$. 
%
%
We applied {\sc IPLR-GS\_P} to \eqref{matcompl-sdp} with the aim 
to recover the nearly low-rank matrix $\hat B$ with tolerance 
in the stopping criterion set to $\epsilon =10^{-4}$.
Results reported in Table \ref{tab::2} are obtained starting from $r=1$
in the rank updating strategy. In the table we also report the rank $\bar r$ 
of the rebuilt matrix $\bar X$. The run corresponding to rank $8$, 
in italic in the table, has been performed allowing a maximum of $10$ 
inner Gauss-Seidel iterations. We observe that the method always 
rebuilt the matrix with accuracy consistent with the stopping tolerance.  
The primal infeasibility is larger than the stopping tolerance, 
as data $b$ are obtained sampling a matrix which is not low-rank 
and therefore the method does not manage to push primal infeasibility 
below $10^{-3}$. Finally we note that in some runs (rank equal to 4,5,6)
the returned matrix $\bar X$ has a rank $\bar r$ larger than that 
of the original matrix $B$.  However, in this situation we can observe 
that $\bar X$ is nearly-low rank as $\sigma_i=O(10^{-3})$, $i=r+1,\ldots,\bar r$ 
while $\sigma_i \gg 10^{-3}$, $i=1,\ldots,r$. Therefore the matrices 
are well rebuilt for each considered rank $r$ and the presence of small 
singular values does not affect the updating/downdating procedure. 

\begin{table}[htb!]
\begin{center}
\begin{tabular}{lcccccc}
\toprule
           &             \multicolumn{ 5}{c}{\sc IPLR-GS\_P }             &            \\
\midrule
  rank/$n$/$m$   & $\|{\cal A}(X)-b\|$ & $\|XS-\mu I\|_F$ & $\lambda_{\min}(S)$ &  $ \| \bar X - \hat B\|_F /\|\hat B\|_F$ &   $\hat r$ &     cpu  \\
\midrule
  3/1200/35910 &      4E-03 &      1E-03 &     4E-08 &          2E-05 &    3 &    218\\

4/1200/47840 &      { 5E-03} &   {1E-03} &  {  4E-08} &   {   2E-05} &  {\bf 5} &   {  506} \\

5/1200/59750 &       { 5E-03} &    {2E-03}   &     { 1E-07}  &      {    2E-05}     &     {\bf 7}  &  {   937}\\

6/1200/71640 &         {6E-03} &    {  1E-03}   &     {  4E-08}  &      {      2E-05}     &     {\bf 7}  &  {  797}\\

7/1200/83510 &      6E-03 &      1E-03 &     4E-08 &          2E-05 &    7 &    642 \\

8/1200/95360 &      {\it 7E-03} &     {\it  1E-03} &     {\it 4E-08} &        {\it  2E-05} &    8  &    1173 \\

       \bottomrule
\end{tabular}  
\caption{{\sc IPLR-GS\_P} starting from $r=1$ on nearly low-rank matrices ($\xi = 10^{-3}$). \label{tab::2}}
\end{center}
\end{table}

\subsection{Tests on real data sets}
In this section we discuss matrix completion problems arising 
in diverse applications as the matrix to be recovered represents 
city-to-city distances, a grayscale image, game parameters 
in a basketball tournament and total number of COVID-19 infections.

\subsection*{Low-rank approximation of partially known matrices}
We now consider an application of matrix completion where one wants 
to find a low-rank approximation of a matrix that is only partially known.

As the first test example, we consider a $312 \times 312 $ matrix taken 
from the ``City Distance Dataset'' \cite{city} and used 
in \cite{CaiCandesShen2010}, that represents the city-to-city 
distances between 312 cities in the US and Canada  computed 
from latitude/longitude data.

We sampled the 30\% of the matrix $G$ of geodesic distances and computed 
a low-rank approximation $ \bar X$ by {\sc IPLR-GS\_P} inhibiting rank 
updating/downdating and using $\epsilon=10^{-4}$. 
We compared the obtained {\color{black} solution with
the approximation $\bar X_{os}$ computed by {\sc OptSpace}  and}
the best rank-$r$ approximation 
$\bar X_r$, computed by truncated SVD ({\sc TSVD}), that requires the knowledge 
of the full matrix $G$. We considered some small values of the rank 
($r=3,4,5$) and in Table \ref{tab:real} reported the errors 
${\cal E}_{ip}=\|G- \bar X\|_F/\|G\|_F$, {\color{black}${\cal E}_{os}=\|G-\bar X_{os}\|_F/\|G\|_F$}
and ${\cal E}_r=\|G-\bar X_r\|_F/\|G\|_F$.
We  remark that the matrix $G$ is not nearly-low-rank, and {\color{black} our} method 
correctly detects that there does not exist a feasible rank $r$ matrix 
as it is not able to decrease the primal infeasibility below $1e0$.
On the other hand the error {\color{black} ${\cal E}_{ip}$} 
in the provided approximation, obtained using 
only the 23\% of the entries, is the same as that of the best rank-$r$ 
approximation $\bar X_r$. 
Note that computing the 5-rank approximation is more demanding. 
In fact the method requires on average: 3.4 Gauss-Seidel iterations, 
37 unpreconditioned CG iterations for computing $\Delta U$ 
and 18 preconditioned CG iterations for computing $\Delta y$.
In contrast, the 3-rank approximation requires on average: 
3.8 Gauss-Seidel iterations, 18 unpreconditioned CG iterations for computing 
$\Delta U$ and 10 preconditioned CG iterations for computing $\Delta y$.
As a final comment, we observe that {\sc IPLR-GS} fails when $r=5$ 
since unpreconditioned CG struggles with the solution of \eqref{sis2}. 
The computed direction $\Delta y$ is not accurate enough and the method 
fails to maintain $S$ positive definite within the maximum number 
of allowed backtracks. Applying the preconditioner cures the problem 
because more accurate directions become available.
{\color{black} Values of the error $ {\cal E}_{op}$ obtained 
with {\sc OptSpace} are larger than ${\cal E}_r$. 
However it is possible to attain comparable values for $r=3$ and $r=5$ 
under the condition that the default maximum number of iterations 
of {\sc OptSpace} is increased 10 times. In these cases, {\sc OptSpace} 
is twice and seven time faster, respectively.} 
%


\begin{table}[htb!]
\begin{center}
\begin{tabular}{ccc|ccccc}

\toprule
           & {{\sc TSVD} }         & {{\sc OptSpace} }     & \multicolumn{ 4}{c}{{\sc IPLR-GS\_P} }                     \\
\midrule
         rank & ${\cal E}_r$ & ${\cal E}_{op}$ & ${\cal E}_{ip}$ & $\|{\cal A}(X)-b\|$ & $\|XS-\mu I\|_F$ & $\lambda_{\min}(S)$  &       cpu  \\
\midrule
         3 &  1.15E-01  &1.97E-01 & 1.23E-01 &   4E00 &  8E-04 &  4E-07 &    48  \\

         4 &  7.06E-02  & 1.99E-01 &7.85E-02 &   3E00 &  8E-04 &  4E-07 &    70 \\

         5 &  5.45E-02  & 1.30E-01 &6.01E-02 &   2E00 &  8E-04 &  4E-07 &   243 \\ 

\bottomrule

\end{tabular}  
\caption{{\sc TSVD}, {\color{black}{\sc OptSpace} and} {\sc IPLR-GS\_P} 
for low rank approximation of the City Distance matrix. \label{tab:real}}
\end{center}
\end{table}

As the second test example, we consider the problem of computing a low rank 
approximation of an image that is only partially known because some pixels 
are missing and we analyzed the cases when the missing pixels are 
distributed both randomly and not randomly (inpainting).
To this purpose, we examined the {\em Lake} $512\times 512$ original 
grayscale image
   \footnote{The Lake image can be downloaded 
   from \url{http://www.imageprocessingplace.com}.} 
shown in Figure \ref{fig:lakea} and generated the inpainted versions 
with the 50\% of random missing pixels (Figure \ref{fig:lakeb})
and with the predetermined missing pixels (Figure \ref{fig:lakec}).
\begin{figure}[h] \centering
 \subfloat[True image]{
	\includegraphics[width=0.31\textwidth]{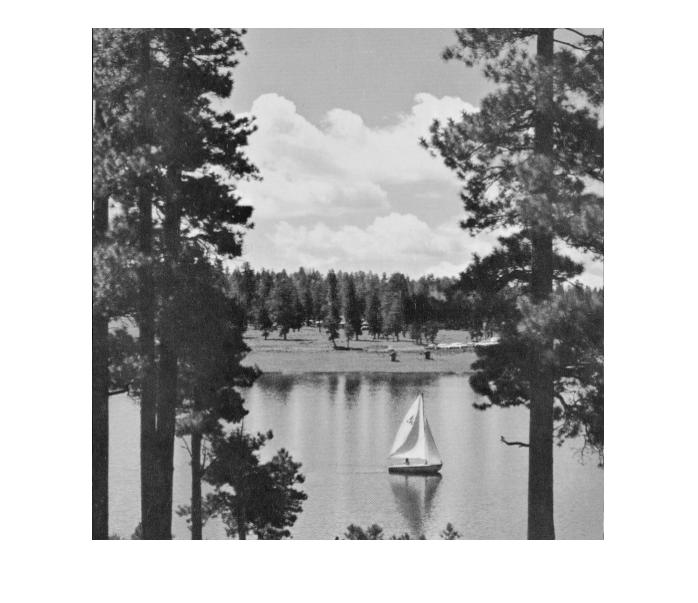}   \label{fig:lakea}
	}
	 \subfloat[50\% random missing pixels]{
	\includegraphics[width=0.31\textwidth]{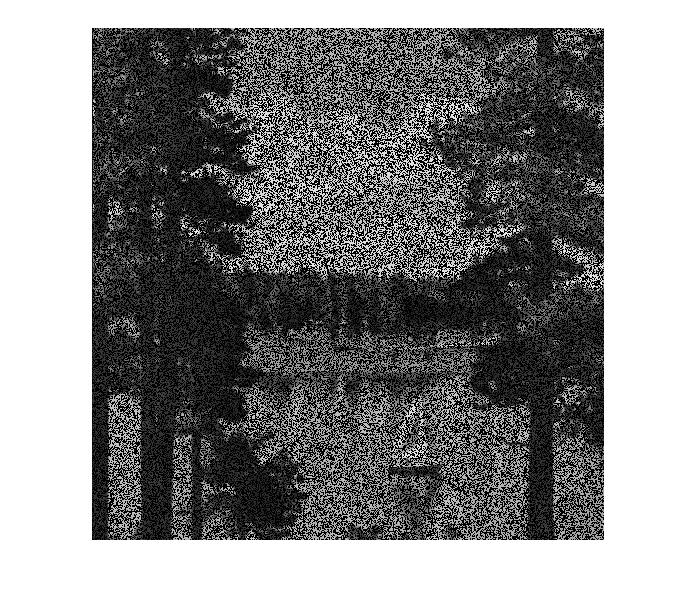}  \label{fig:lakeb}
 	}
		 \subfloat[7\% nonrandom missing pixels]{
	\includegraphics[width=0.31\textwidth]{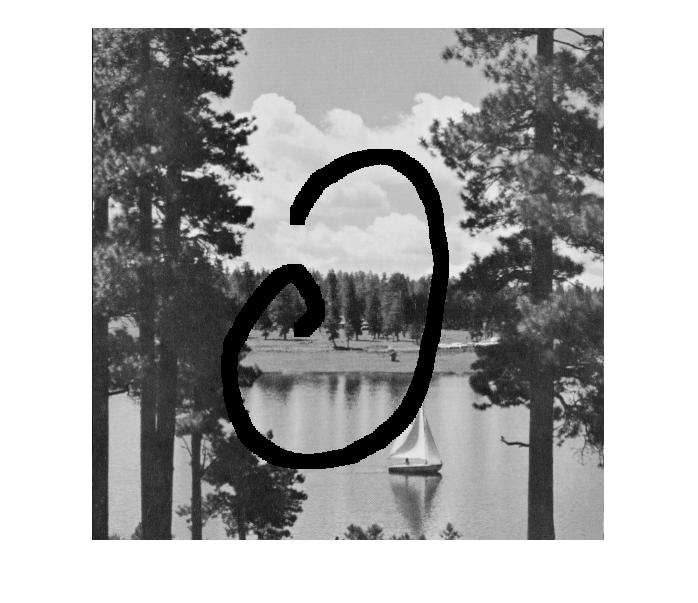}  \label{fig:lakec}
	}
  \caption{The Lake test true image and the inpainted versions.} 
\end{figure}

We performed tests fixing the rank to values ranging from 10 to 150
and therefore used {\sc IPLR-BB} which is computationally less sensitive 
than {\sc IPLR-GS} to the magnitude of the rank.

In Figure \ref{fig:psnr_lake} we plot the quality of the reconstruction 
in terms of relative error $ {\cal E}$ and PSNR (Peak-Signal-to-Noise-Ratio)
against the rank, for {\sc IPLR-BB}, {\color{black}{\sc OptSpace}} and truncated SVD.
We observe that when the rank is lower than 40, 
 {\color{black}{\sc IPLR-BB} and {\sc TSVD}} 
give comparable results, but when the rank increases the quality
obtained with {\sc IPLR-BB} does not improve. As expected, 
by adding error information available only from the knowledge
of the full matrix, the truncated SVD continues to improve 
the accuracy as the rank increases. 
{\color{black} The reconstructions produced with {\sc OptSpace} 
display noticeably worse values of the two relative errors
(that is, larger ${\cal E}$ and smaller PSNR, respectively) 
despite the rank increase.}
	
Figure \ref{fig:inp_lake} shows that {\sc IPLR-BB} is able to recover 
the inpainted image in Figure \ref{fig:lakec} and that visually 
the quality of the reconstruction benefits from a larger rank.
{\color{black} Images restored by {\sc OptSpace} are not reported 
since the relative PSNR values are approximately 10 points lower 
than those obtained with {\sc IPLR-BB}. The quality of the reconstruction 
of images \ref{fig:lakeb} and \ref{fig:lakec} obtained with {\sc OptSpace} 
cannot be improved even if the maximum number of iterations is increased 
tenfold.} 


\begin{figure} \centering
		\begin{tikzpicture}
		\begin{axis}[width=. 5* \textwidth, 
		   title = {$ {\cal E}$}, 
		   legend pos=south west,
		   legend cell align=left,
		   xlabel = {\small Rank}, 
		   height = .25 \textheight,
		   xmax = 100,
		   xmin = 5,
		 ]
		  \addplot+[mark=triangle*,brown,thick,mark options={ fill=brown}] table[x index = 0, y index = 1] {./rank_Error_50perc_lake_R1_2.dat};
		   \addplot+ [mark=*,blue,thick,mark options={ fill=blue}]table[x index = 0, y index = 2] {./rank_Error_50perc_lake_R1_2.dat};
		    \addplot+ [mark=square*,red,thick,mark options={ fill=red}]table[x index = 0, y index = 3] {./rank_Error_50perc_lake_R1_2.dat};
	\legend{\footnotesize {\sc TSVD}, \footnotesize  {\sc IPLR-BB},\footnotesize {\sc OptSpace}};\end{axis}
		
	\end{tikzpicture}		
			\begin{tikzpicture}
		\begin{axis}[width=. 5* \textwidth, 
		   title = {PSNR}, 
		   legend pos=north west,
		   legend cell align=left,
		   xlabel = {\small Rank}, 
		   height = .25 \textheight,
		   xmax = 100,
		   xmin = 5,
		 ]
		  \addplot+ [mark=triangle*,brown,thick,mark options={ fill=brown}]table[x index = 0, y index = 1] {./rank_psnr_50perc_lake_R1_2.dat};
		  \addplot+[mark=*,blue,thick,mark options={ fill=blue}]  table[x index = 0, y index = 2] {./rank_psnr_50perc_lake_R1_2.dat};
		  \addplot+[mark=square*,red,thick,mark options={ fill=red}] table[x index = 0, y index = 3] {./rank_psnr_50perc_lake_R1_2.dat};
\legend{\footnotesize {\sc TSVD}, \footnotesize  {\sc IPLR-BB},\footnotesize {\sc OptSpace}}\end{axis}
		
	\end{tikzpicture}		
  \caption{Rank versus error and PSNR of the Lake image recovered with
  truncated SVD ({\sc TSVD}), {\sc IPLR-BB} and {\color{black} \sc OptSpace} (50\% random missing pixels in Figure \ref{fig:lakeb}). }
\label{fig:psnr_lake}
	\end{figure}
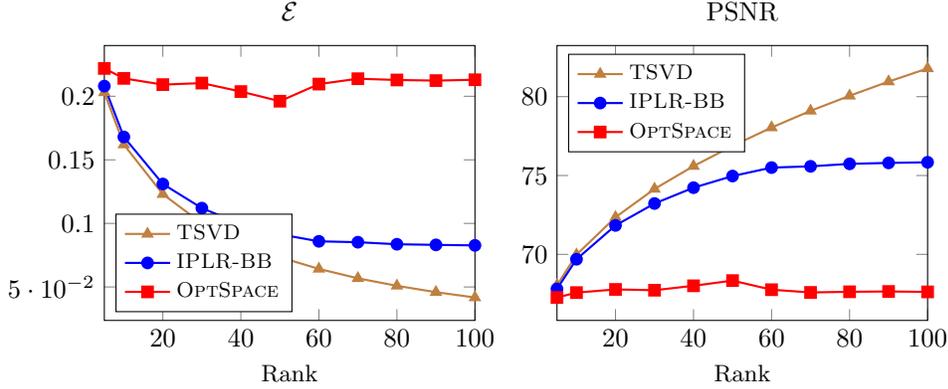

\begin{figure} 
\captionsetup[subfigure]{justification=centering}\centering
		 \subfloat[
	 $r = 80$ 
		 PSNR = 76.15, $ {\cal E}$= 1.16E-01]
		 [{\sc IPLR-BB}: $r = 80$ \\
		 PSNR = 76.15, $ {\cal E}$= 1.16E-01]{
 	\includegraphics[width=0.32\textwidth]{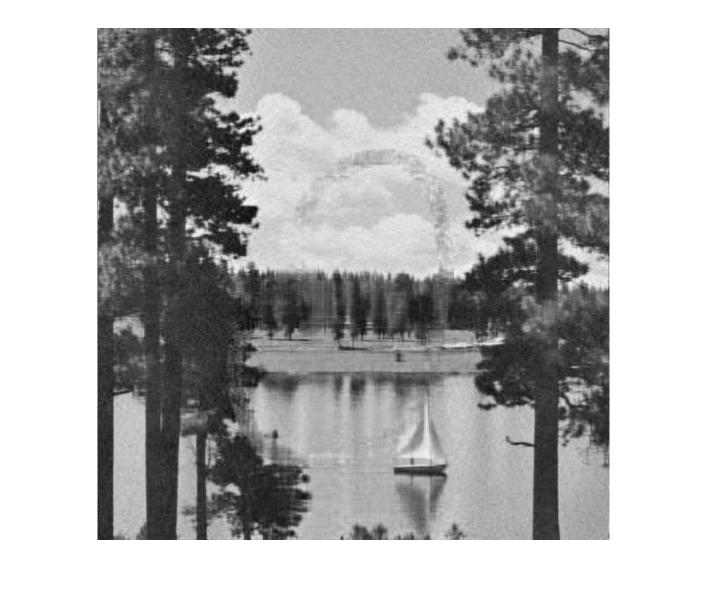}  
 	}
		 \subfloat[
		 $r = 100$ 
		 PSNR = 76.64, $ {\cal E}$= 7.53E-01]
		 [{\sc IPLR-BB}: $r = 100$  \\
		 PSNR = 76.64, $ {\cal E}$= 7.53E-01]{
	\includegraphics[width=0.32\textwidth]{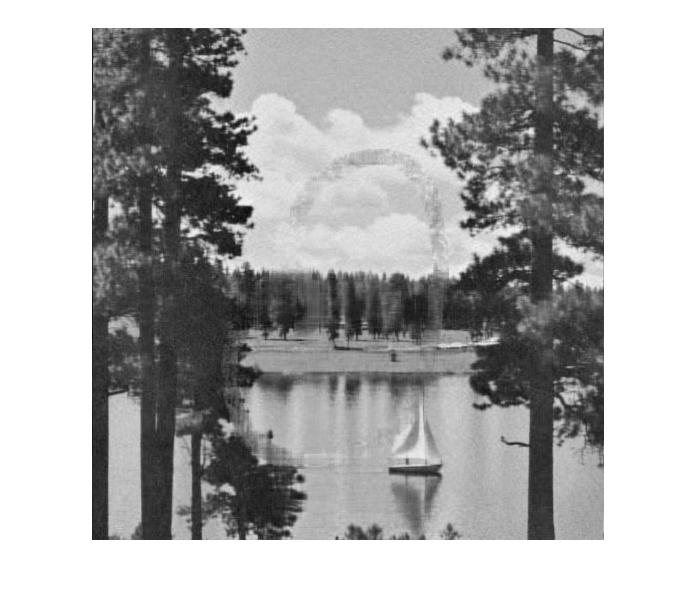}  
	}
		 \subfloat[$r = 150$ 
		 PSNR = 78.44, $ {\cal E}$= 6.12E-02]
		 [{\sc IPLR-BB}: $r = 150$ \\
		 PSNR = 78.44, $ {\cal E}$= 6.12E-02]{
	\includegraphics[width=0.32\textwidth]{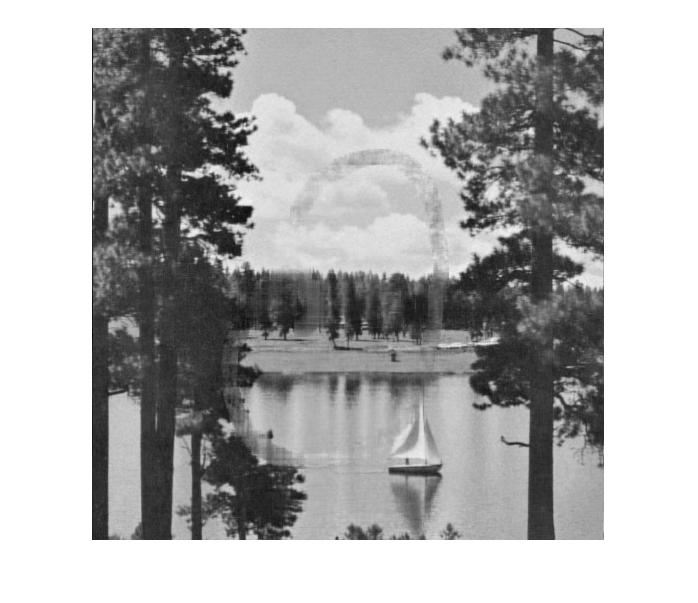}  
	} 
%
  \caption{Images recovered by {\sc IPLR-BB} for different rank values
  and corresponding PSNR and error (non-random missing pixels in Figure  \ref{fig:lakec}).}
  \label{fig:inp_lake}
\end{figure}

\subsection*{Application to sports game results predictions}	
Matrix completion is used in sport predictive models to forecast 
match statistics \cite{ji2015march}. We consider the dataset 
concerning the NCAA Men's Division I Basketball Championship, 
in which each year 364 teams participate.\footnote{The March Machine Learning Mania 
    dataset is available in the website 
    \url{https://www.kaggle.com/c/march-machine-learning-mania-2016/data}.}. 
The championship is organized in 32 groups, called Conferences, 
whose winning teams face each other in a final single elimination 
tournament, called March Madness. Knowing match statistics of games 
played in the regular Championship, the aim is to forecast the potential 
statistics of the missing matches played in the March Madness phase. 
In our tests, we have selected one match statistic of the 2015 Championship, 
namely the fields goals attempted (FGA) and have built a matrix where 
teams are placed on rows and columns and nonzero $ij$-values correspond
to the FGA made by team $i$ and against team $j$.
In this season, only 3771 matches were held and therefore we obtained 
a rather sparse $364\times 364$ matrix of FGA statistics; in fact, only 
the 5.7\% of entries of the matrix that has to be predicted is known. 
To validate the quality of our predictions we used the statistics  
of the 134 matches actually played by the teams in March Madness. 
We verified that in order to obtain reasonable predictions 
of the missing statistics the rank of the recovered matrix has 
to be sufficiently large. 
Therefore we use {\sc IPLR-BB} setting the starting rank $r=20$, rank increment $\delta_r=10$ 
and $\epsilon=10^{-3}$. The algorithm  terminated  recovering matrix $\bar X$ of rank 30.
%
%
In Figure \ref{fig:FGA} we report the bar plot of the exact and predicted 
values for each March Madness match. The matches  have been numbered 
from 1 to 134.  We note that except for 12 mispredicted statistics, 
the number of fields goals attempted is predicted reasonably well. 
In fact, we notice that the relative error between the true and 
the predicted statistic is smaller than $20\%$ in the 90\% 
of predictions. 

{\color{black} On this data set, {\sc OptSpace} gave similar results 
to those in Figure \ref{fig:FGA} returning a matrix of rank 2.}

\begin{figure}
	\centering
		\includegraphics[width=0.48\textwidth]{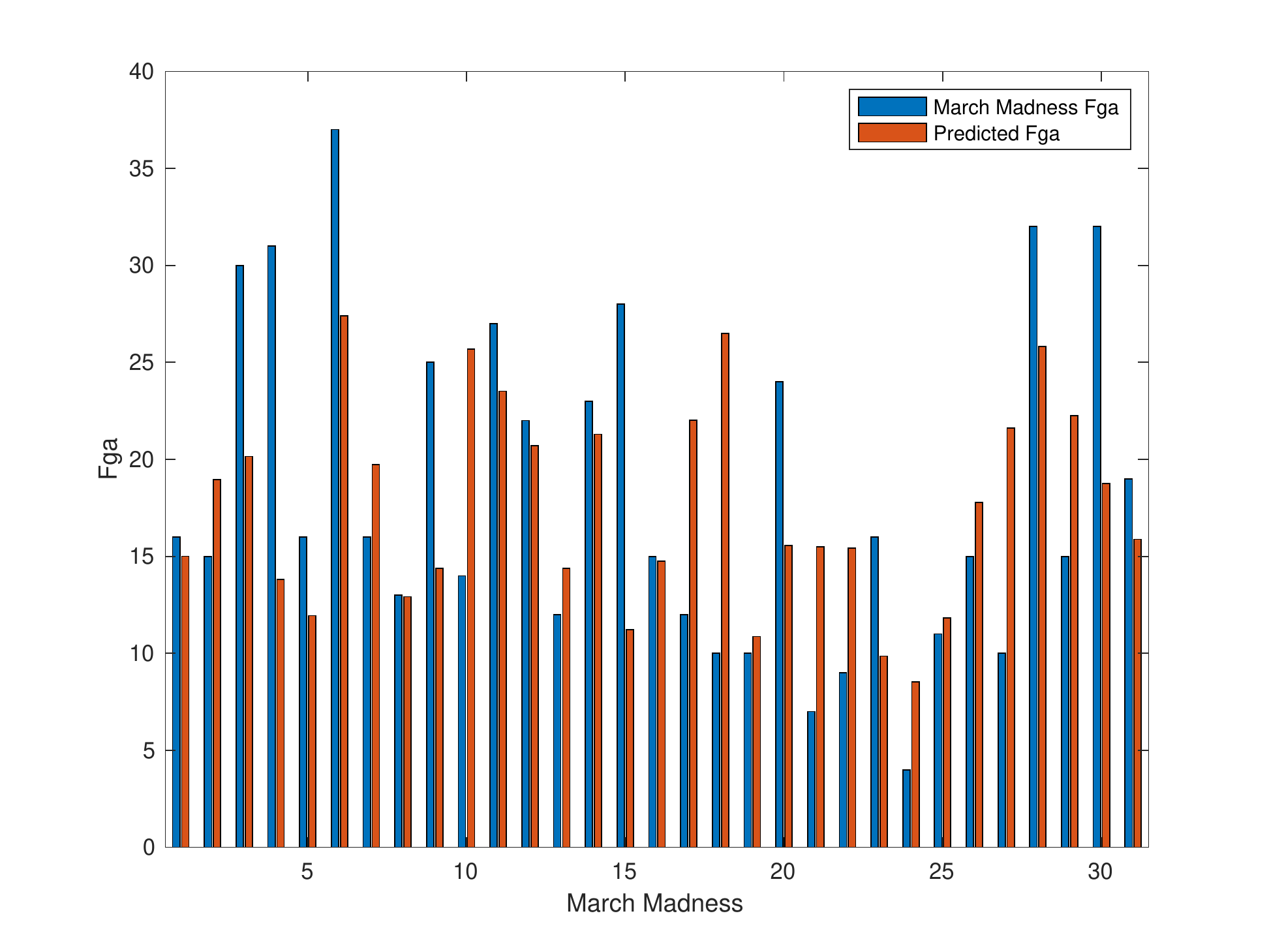}
		\includegraphics[width=0.48\textwidth]{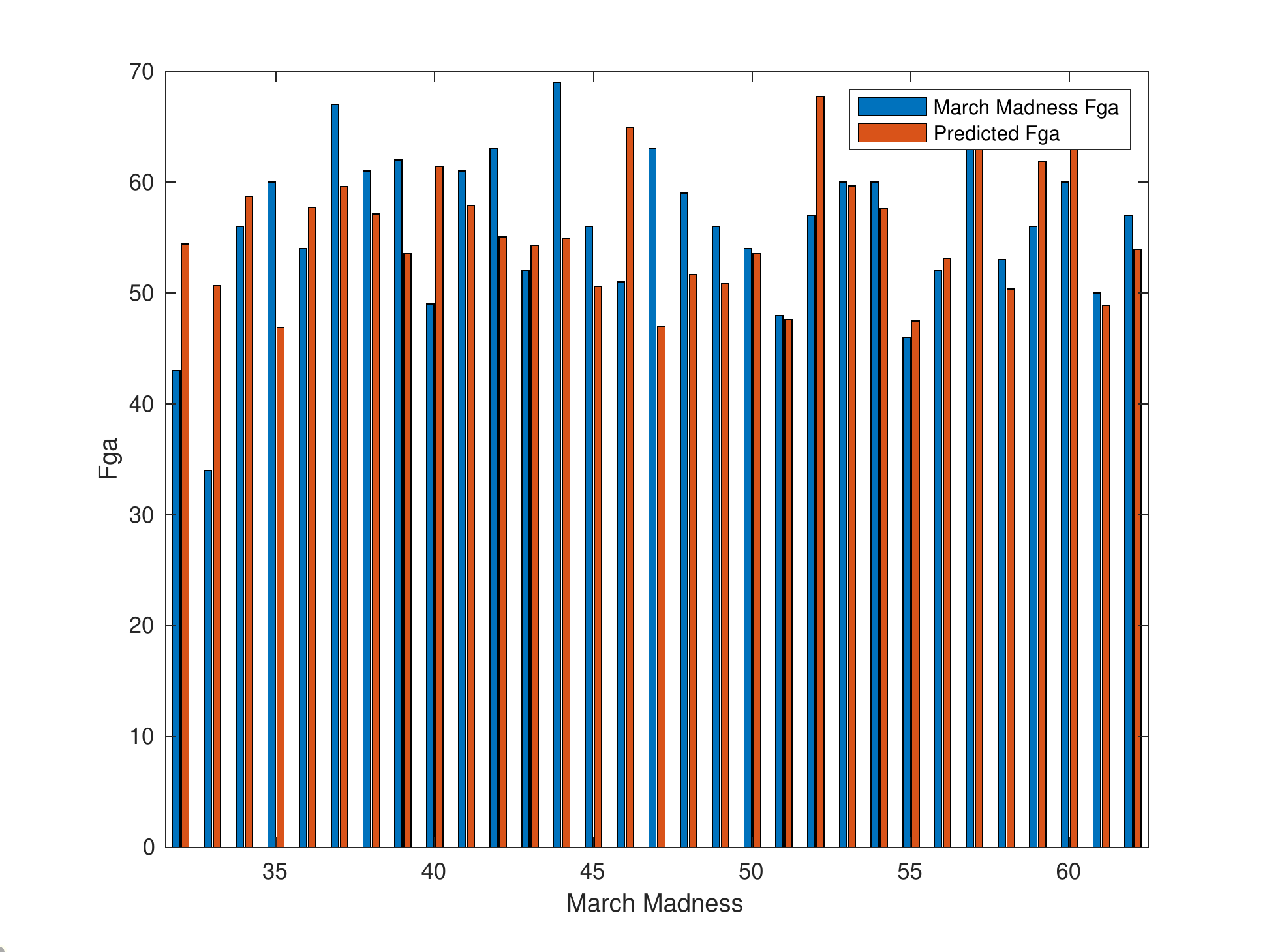}
		\includegraphics[width=0.48\textwidth]{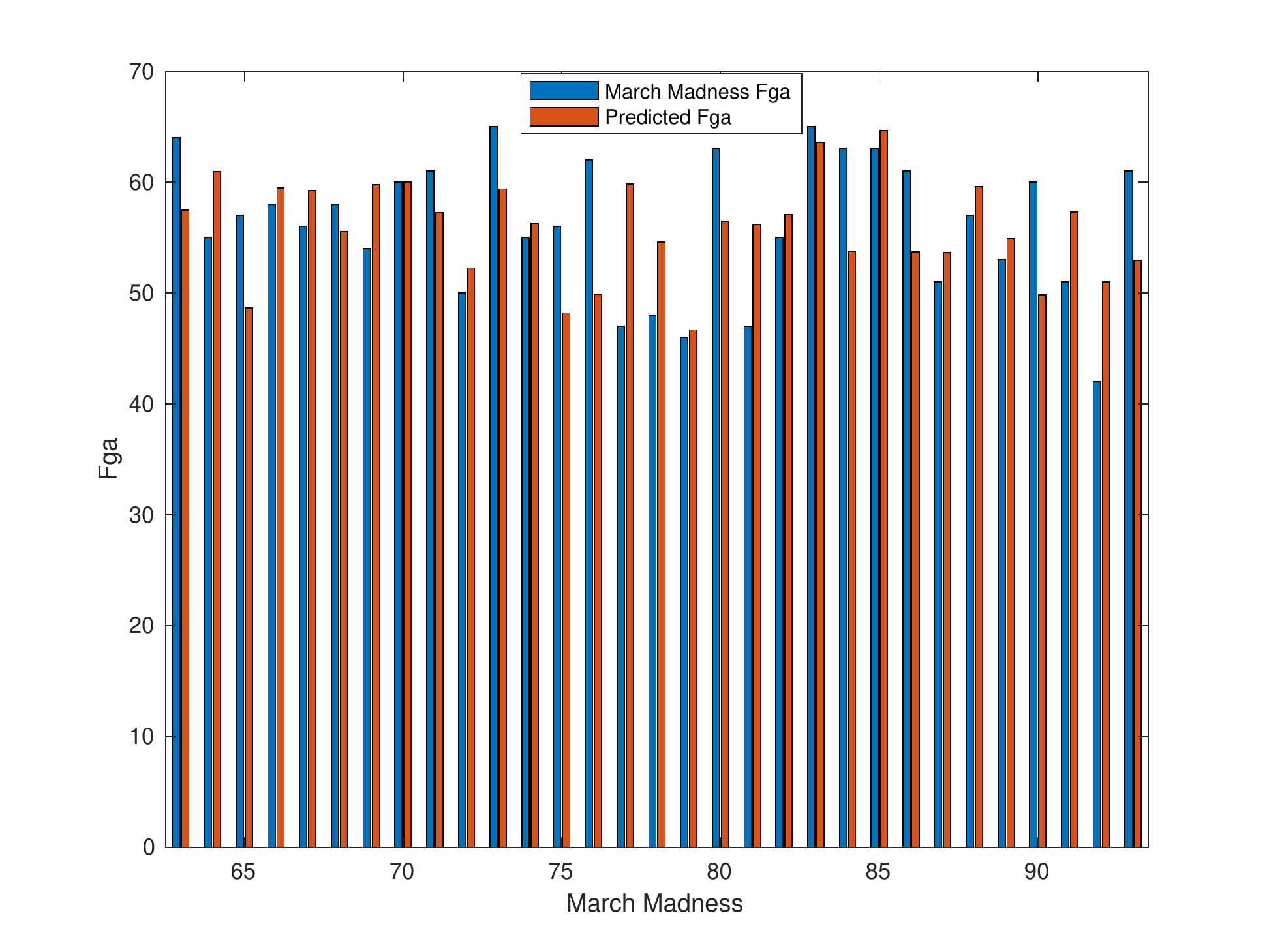}
	\includegraphics[width=0.48\textwidth]{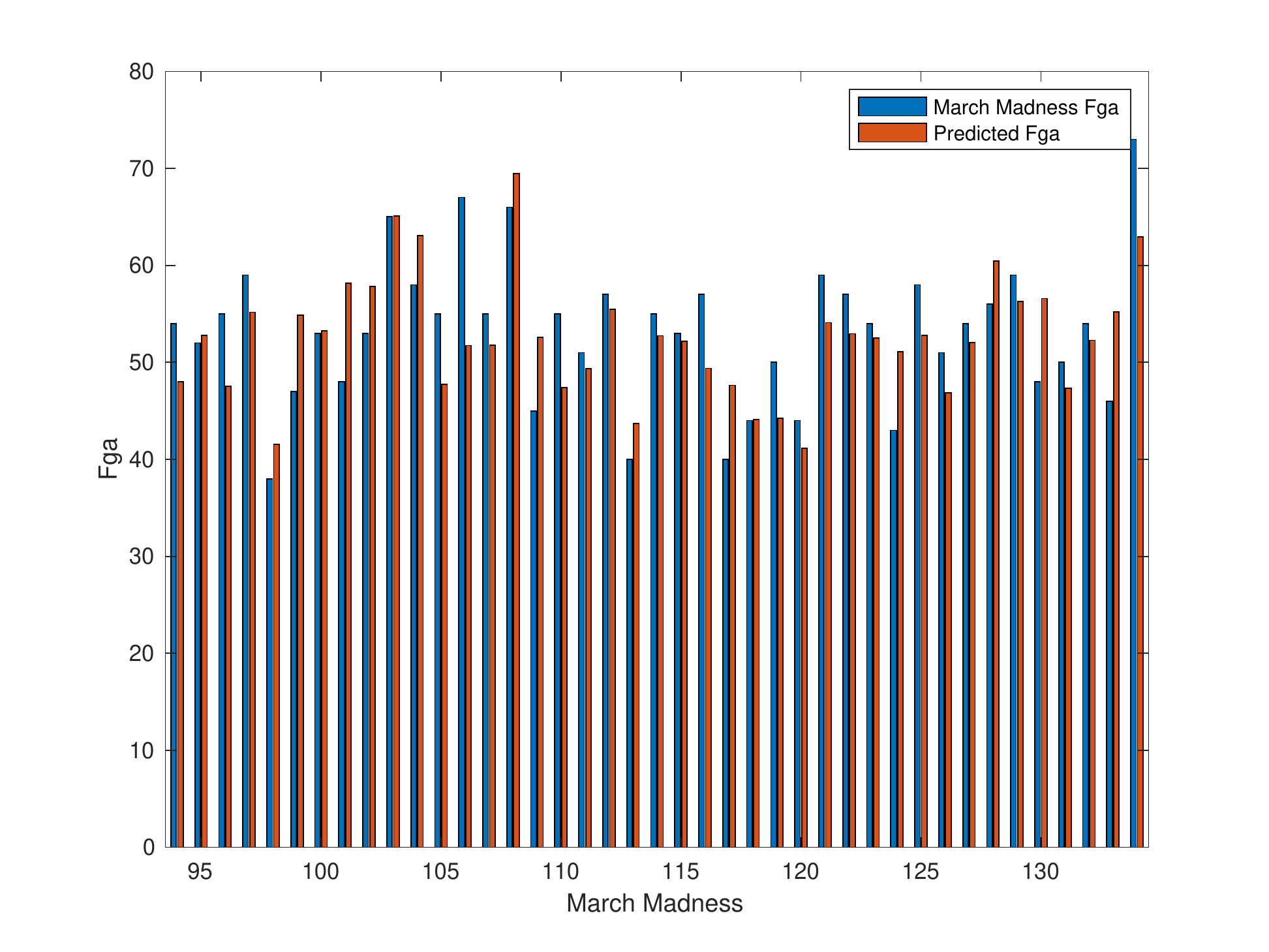}

	\caption{Predicted and March Madness FGA statistics. 
        Top-Left: matches 1 to 31, 
        Top-Right matches 32 to 62, 
        Bottom-Left matches 63 to 93,
	Bottom-Right matches 94 to 134.}
    \label{fig:FGA}
\end{figure}
 
\subsection*{Application to COVID-19 infections missing data recovery}	

We now describe a matrix completion problem where data are the number 
of COVID-19 infections in provincial capitals of regions in the North of Italy.
Each row and column of the matrix corresponds to a city and to a day, 
respectively, so that the $ij$-value corresponds to the total number 
of infected people in the city $i$ on the day $j$.
We used data made available by the Italian Protezione Civile
   \footnote{The dataset is available at 
   \url{https://github.com/pcm-dpc/COVID-19/tree/master/dati-province}.} 
regarding the period between March 11th and April 4th 2020, that is, 
after restrictive measures have been imposed by the Italian Government 
until the current date.
We assume that a small percentage (5\%) of data is not available 
to simulate the real case because occasionally certain laboratories 
do not communicate data to the central board.
In such a case our aim is to recover this missing data and provide 
an estimate of the complete set of data to be used to make 
analysis and forecasts of the COVID-19 spread.
Overall, we build a $47 \times 24$ dense matrix and attempt 
to recover  56 missing entries in it.
{\color{black}
We use {\sc IPLR-GS\_P} with starting rank $r=2$, rank increment $\delta_r=1$ 
and $\epsilon=10^{-4}$ and we have obtained a matrix $\bar X$ of rank 2.
The same rank is obtained using {\sc OptSpace} but only if the maximum 
number of its iterations is increased threefold. 
In Figure \ref{fig:covid} both the predicted and actual data (top) 
and the percentage error (bottom) are plotted using the two solvers.
We observe that {\sc IPLR-GS\_P} yields an error below 10\% except 
for 8 cases and in the worst case it reaches 22\%. 
The error obtained with {\sc OptSpace} exceeds 10\% in 15 cases 
and in one case reaches 37\%.}

The good results obtained with  {\sc IPLR-GS\_P} for this small example are encouraging 
for applying the matrix completion approach to larger scale data sets.  


\begin{figure}
   \centering
   \includegraphics[width=0.98\textwidth]{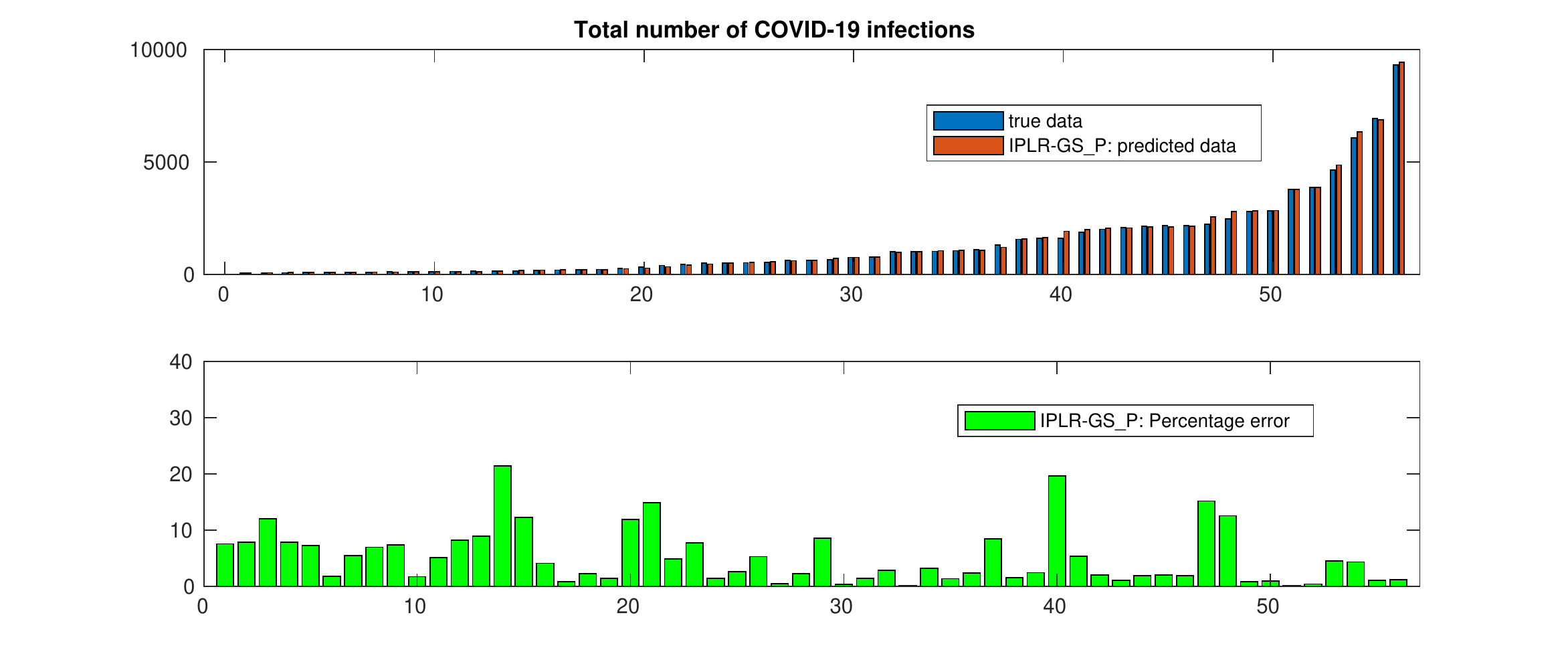} 
   \includegraphics[width=0.98\textwidth]{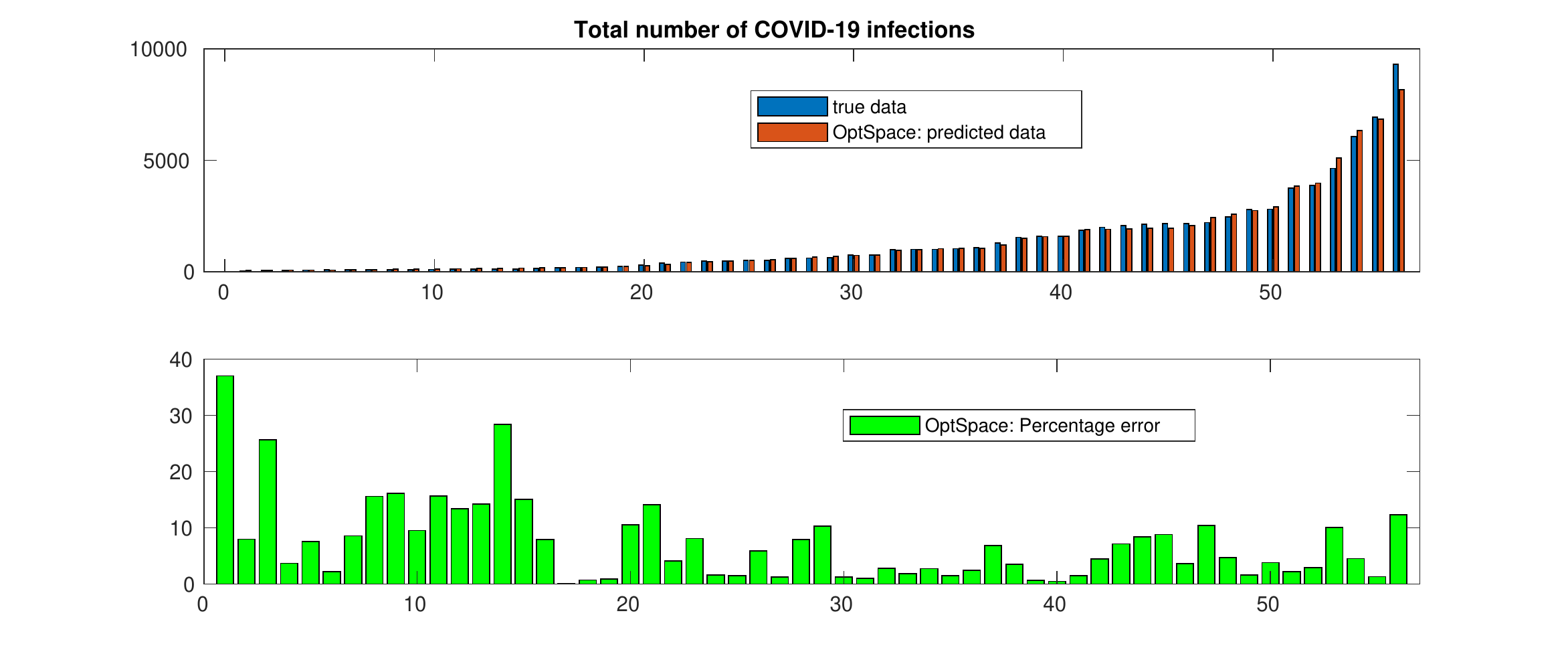} 
   \caption{Predicted and actual number of COVID-19 infections (top) 
            and corresponding percentage error{\color{black}, 
            obtained with {\sc IPLR-GS\_P} (2 top plots) 
            and {\sc OptSpace} (2 bottom plots).}}
   \label{fig:covid}
\end{figure}

\section{Conclusions} 
\label{Concls} 

We have presented a new framework for an interior point method 
for low-rank semidefinite programming. The method relaxes the rigid 
IPM structure and replaces the general matrix $X$ with the special 
form \eqref{Xlow} which by construction enforces a convergence 
to a low rank solution as $\mu$ goes to zero. Therefore effectively 
instead of requiring a general $n \times n$ object, the proposed 
method works with an $n \times r$ matrix $U$, which delivers 
significant storage and cpu time savings. It also handles well problems 
with noisy data and allows to adaptively correct the (unknown) rank.
We performed extensive numerical results on SDP reformulation 
of matrix completion problems using both the first- 
and the second-order methods to compute search directions. 
The convergence of the method has been analysed under the assumption that 
eventually the steplength $\alpha_k$ is equal to one (Assumption 1).
However, this seemingly strong assumption does hold in all our numerical 
tests except for the sports game results predictions where the number 
of known entries of the matrix is extremely low.

%
%
Our numerical experience shows the efficiency of the proposed method 
and its ability to handle large scale matrix completion problems 
and medium scale problems arising in real-life applications.
{\color{black}
A comparison with {\sc OptSpace} reveals that the proposed method 
is versatile and it delivers more accurate solutions when applied 
to ill-conditioned or to some classes of real-life applications. 
It is generally slower than  methods specially designed for matrix 
completion as {\sc OptSpace}, but our method has potentially a wider 
applicability.}
%

\appendix
\section{Notes on Kronecker product and matrix calculus}\label{sec:app}
Let us also recall several useful formulae which involve Kronecker products. 
For each of them, we assume that matrix dimensions are consistent 
with the multiplications involved. 

Let $A,B,C,D$ be matrices of suitable dimensions. Then
\begin{eqnarray}
(A \otimes B) (C\otimes D) & = & (AC \otimes BD)  \label{kron1} \\
 vec(AXB) & = & (B^T \otimes A) vec(X)  \label{kron2} \\
 vec(AX^TB) & = & (B^T \otimes A) vec(X^T) = (B^T \otimes A) \Pi vec(X) \label{kron3},
\end{eqnarray}
where $\Pi$ is a permutation matrix which transforms $vec(X)$ to $vec(X^T)$.
Moreover, assume that $A$ and $B$ are square matrices of size $n$ and $m$ respectively. 
Let $\lambda_1, \dots, \lambda_n$ be the eigenvalues of $A$ and 
$\mu_1, \dots, \mu_m$ be those of $B$ (listed according to multiplicity). 
Then the eigenvalues of $A \otimes B $ are
$$
\lambda _{i}\mu _{j},\qquad i=1,\ldots ,n,\,j=1,\ldots ,m.
$$

Finally, following \cite{deriv}, we recall some rules for derivatives 
of matrices that can be easily derived applying the standard derivation 
rules for vector functions (chain rule, composite functions) and 
identifying $d\, \mathcal{G}(X)/d\,(X)$ by using the vectorization
$d \,vec \mathcal{G}(X)/ d\, vec(X)$, where $\mathcal{G}(X)$ 
is a matrix function. In particular we have that given the matrices 
$A \in \mathbb{R}^{n\times m}$, $B \in \mathbb{R}^{p\times q}$
and $X$ defined accordingly, it holds
\begin{eqnarray*}
  \frac{d\, AA^T}{d\, A} & = & (A \otimes I_n) + (I_n\otimes A),\\
  \frac{d\, AXB}{d\,X} & = & (B^T \otimes A).
\end{eqnarray*}

\bibliographystyle{siam}
\bibliography{mybiblio_sdp}

\end{document}